	\theoremstyle{remark}
	\newtheorem{rem}{Remark}[section]
	\theoremstyle{plain}
	\newtheorem{thm}{Theorem}[section]
	\newtheorem{lem}[thm]{Lemma}
\pgfplotsset{compat=newest}
\definecolor{mygreen}{RGB}{28,172,0} % color values Red, Green, Blue
\definecolor{mylilas}{RGB}{170,55,241}
\renewcommand{\vec}[1]{\mathbf{#1}}
\newcommand{\tensorOne}[1]{\boldsymbol{#1}}
\newcommand{\tensorTwo}[1]{\boldsymbol{#1}}
\newcommand{\tensorFour}[1]{\mathbb{#1}}
\newcommand{\Mat}[1]{#1}
\renewcommand{\Vec}[1]{\boldsymbol{\mathbf{#1}}}
\newcommand{\blkMat}[1]{\boldsymbol{\mathbf{#1}}}
\newcommand{\blkVec}[1]{\boldsymbol{\mathbf{#1}}}
\newcommand{\sub}[1]{_\mathit{#1}}
\newcommand{\logLogSlopeTriangle}[5]
{
    % #1. Relative offset in x direction.
    % #2. Width in x direction, so xA-xB.
    % #3. Relative offset in y direction.
    % #4. Slope d(y)/d(log10(x)).
    % #5. Plot options.

    \pgfplotsextra{
        \pgfkeysgetvalue{/pgfplots/xmin}{\xmin}
        \pgfkeysgetvalue{/pgfplots/xmax}{\xmax}
        \pgfkeysgetvalue{/pgfplots/ymin}{\ymin}
        \pgfkeysgetvalue{/pgfplots/ymax}{\ymax}

        % Calculate auxilliary quantities, in relative sense.
        \pgfmathsetmacro{\xArel}{#1}
        \pgfmathsetmacro{\yArel}{#3}
        \pgfmathsetmacro{\xBrel}{#1-#2}
        \pgfmathsetmacro{\yBrel}{\yArel}
        \pgfmathsetmacro{\xCrel}{\xArel}
        %\pgfmathsetmacro{\yCrel}{ln(\yC/exp(\ymin))/ln(exp(\ymax)/exp(\ymin))} % REPLACE THIS EXPRESSION WITH AN EXPRESSION INDEPENDENT OF \yC TO PREVENT THE 'DIMENSION TOO LARGE' ERROR.

        \pgfmathsetmacro{\lnxB}{\xmin* (1- (#1-#2))+\xmax* (#1-#2)} % in [xmin,xmax].
        \pgfmathsetmacro{\lnxA}{\xmin* (1-#1)+\xmax*#1} % in [xmin,xmax].
        \pgfmathsetmacro{\lnyA}{\ymin* (1-#3)+\ymax*#3} % in [ymin,ymax].
        \pgfmathsetmacro{\lnyC}{\lnyA+#4* (\lnxA-\lnxB)}
        \pgfmathsetmacro{\yCrel}{(\lnyC-\ymin)/ (\ymax-\ymin)} % THE IMPROVED EXPRESSION WITHOUT 'DIMENSION TOO LARGE' ERROR.

        % Define coordinates for \draw. MIND THE 'rel axis cs' as opposed to the 'axis cs'.
        \coordinate(A) at (rel axis cs:\xArel,\yArel);
        \coordinate(B) at (rel axis cs:\xBrel,\yBrel);
        \coordinate(C) at (rel axis cs:\xCrel,\yCrel);

        % Draw slope triangle.
        \draw[#5]   (A)-- node[pos=0.5,anchor=north] {1}
                    (B)-- 
                    (C)-- node[pos=0.5,anchor=west] {#4}
                    cycle;
    }
}
\begin{document}

\begin{frontmatter}

\title{ Efficient solvers for hybridized three-field mixed finite element coupled poromechanics\tnoteref{t1}}
\tnotetext[t1]{This work is a collaborative effort.}

\author[UniPD]{Matteo Frigo}
\ead{matteo.frigo.3@phd.unipd.it}

\author[LLNL]{Nicola Castelletto}
\ead{castelletto1@llnl.gov}

\author[UniPD]{Massimiliano Ferronato}
\ead{massimiliano.ferronato@unipd.it}

\author[LLNL]{Joshua A. White}
\ead{jawhite@llnl.gov}

\address[UniPD]{Department of Civil, Environmental and Architectural Engineering, University of Padova, Italy}
\address[LLNL]{Atmospheric, Earth, and Energy Division, Lawrence Livermore National Laboratory, United States}

\journal{arXiv}

\begin{abstract}
We consider a mixed hybrid finite element formulation for coupled poromechanics. A stabilization strategy based on a macro-element approach is advanced to eliminate the spurious pressure modes appearing in undrained/incompressible conditions. The efficient solution of the stabilized mixed hybrid block system is addressed by developing a class of block triangular preconditioners based on a Schur-complement approximation strategy. Robustness, computational efficiency and scalability of the proposed approach are theoretically discussed and tested using challenging benchmark problems on massively parallel architectures.
\end{abstract}

\begin{keyword}
Poromechanics \sep{} 
Hybridization \sep{}
Preconditioning \sep{}
Scalability \sep{}
Algebraic multigrid 
\end{keyword}

\end{frontmatter}

%%%%%%%%%%%%%%%%%%%%%%%%%%%%%%%%%%%%%%%%%%%%%%%%%%%%%%%%%%%%%%%%%%%%%%%%%%%%%%%%%%%

%%
%% Start line numbering here if you want
%%
%\linenumbers{}

\allowdisplaybreaks{}

\section{Introduction}

We focus on a three-field mixed (displacement-velocity-pressure) formulation of classical linear poroelasticity \cite{Bio41,Cou04}.
Let $\Omega \subset \mathbb{R}^d$ ($d=2,3$) and $\Gamma$ be the domain occupied by the porous medium and its Lipschitz-continuous boundary, respectively, with $\tensorOne{x}$ the position vector in $\mathbb{R}^d$. 
We denote time with $t$, belonging to an open interval $\mathcal{I} =\left( 0, t_{\max} \right)$.
The boundary is decomposed as $\Gamma = \overline{\Gamma_{u} \cup \Gamma_{\sigma}} = \overline{\Gamma_p \cup \Gamma_{q}}$, with $\Gamma_u \cap \Gamma_{\sigma} = \Gamma_p \cap \Gamma_{q} = \emptyset$, and $\tensorOne{n}$ denotes its outer normal vector.
Assuming quasi-static, saturated, single-phase flow of a slightly compressible fluid, the set of governing equations consists of a conservation law of linear momentum and a conservation law of mass expressed in mixed form, i.e., introducing Darcy's velocity as an additional unknown.
The strong form of the initial-boundary value problem (IBVP) consists of finding the displacement $\tensorOne{u} : \overline{\Omega} \times \mathcal{I} \rightarrow \mathbb{R}^d$, the Darcy velocity $\tensorOne{q} : \overline{\Omega} \times \mathcal{I} \rightarrow \mathbb{R}^d$, and the excess pore pressure $p : \overline{\Omega} \times \mathcal{I} \rightarrow \mathbb{R}$ that satisfy:
\begin{linenomath}
\begin{subequations}
\begin{align}
  \nabla \cdot \left( \tensorFour{C}_{dr} : \nabla^{s} \tensorOne{u} - b p \tensorTwo{1} \right)
  &=
  \tensorOne{0} & &\mbox{ in } \Omega \times \mathcal{I}
  & &\mbox{(equilibrium)}, \label{momentumBalanceS}\\
  \mu \tensorTwo{\kappa}^{-1} \cdot \tensorOne{q} + \nabla p
  &= 
  \tensorOne{0} & &\mbox{ in } \Omega \times \mathcal{I}
  & &\mbox{(Darcy's law)}, \label{darcyS}  \\	
  b \nabla \cdot \dot{\tensorOne{u}} + S_{\epsilon} \dot{p} + \nabla \cdot \tensorOne{q} &=
  f & &\mbox{ in } \Omega \times \mathcal{I}
  & &\mbox{(continuity)}. \label{massBalanceS}
\end{align}\label{eq:IBVP}\null
\end{subequations}
\end{linenomath}
Here, $\tensorFour{C}_{dr}$ is the rank-four elasticity tensor, $\nabla^{s}$ is the symmetric gradient operator, $b$ is the Biot coefficient, and $\tensorTwo{1}$ is the rank-two identity tensor; $\mu$ and $\tensorTwo{\kappa}$ are the fluid viscosity and the rank-two permeability tensor, respectively; $S_{\epsilon}$ is the constrained specific storage coefficient, i.e. the reciprocal of Biot's modulus, and $f$ the fluid source term. 
The following set of boundary and initial conditions complete the formulation:
\begin{linenomath}
\begin{subequations}
\begin{align}
  \tensorOne{u} &= \bar{\tensorOne{u}}
  & &\mbox{ on } \Gamma_u \times \mathcal{I}, & \label{momentumBalanceS_DIR}\\
  \left( \tensorFour{C}_{dr} : \nabla^{s} \tensorOne{u} - b p \tensorTwo{1} \right) \cdot \tensorOne{n} &= 
  \bar{\tensorOne{t}}
  & &\mbox{ on } \Gamma_{\sigma} \times \mathcal{I}, & \label{momentumBalanceS_NEU}\\
  \tensorOne{q} \cdot \tensorOne{n} &= \bar{q}
  & &\mbox{ on } \Gamma_{q} \times \mathcal{I}, &\label{massBalanceS_NEU}\\    
  p &=\bar{p}
  & &\mbox{ on } \Gamma_p \times \mathcal{I}, & \label{massBalanceS_DIR}\\
  p(\tensorOne{x}, 0) &=p_0
  & &\mbox{ } \tensorOne{x} \in \overline{\Omega}, &  \label{massBalanceS_IC}
\end{align}\label{eq:IBVP_b}\null
\end{subequations}
\end{linenomath}
where $\bar{\tensorOne{u}}$, $\bar{\tensorOne{t}}$, $\bar{q}$, and $\bar{p}$ are the prescribed boundary displacements, tractions, Darcy velocity and excess pore pressure, respectively, whereas $p_0$ is the initial excess pore pressure.
More precisely, the initial condition should be given as
\begin{equation}
  b \nabla \cdot \tensorOne{u} (\tensorOne{x}, 0) + S_{\epsilon} p (\tensorOne{x}, 0) = b \nabla \cdot {\tensorOne{u}_0 } + S_{\epsilon} {p_0},
  %\xi (\tensorOne{x}, 0)  = b \nabla \cdot {\tensorOne{u}_0 } + S_{\epsilon} {p_0}
  \qquad
  \tensorOne{x} \in \overline{\Omega},
\end{equation}
with $\tensorOne{u}_0$ the initial displacement field, i.e. specifying the initial fluid content of the medium \cite{Bio41}.
However, in practical simulations, the pressure is often measured or computed through the hydrostatic assumption, and the initial displacement is then obtained so as to satisfy Equation \eqref{momentumBalanceS}---see, e.g. \cite{GirKumWhe16,GirWheAlmDan19}.
We refer the reader to \cite{Sho00} for a rigorous discussion on this issue.

Let us denote with $\boldsymbol{H}^1(\Omega)$ the Sobolev space of vector functions whose first derivatives are square-integrable, i.e., they belong to the Lebesgue space $L^2(\Omega)$; and let $\boldsymbol{H}(\text{div};\Omega)$ be the Sobolev space of vector functions with square-integrable divergence.
Introducing the spaces:
\begin{linenomath}
\begin{subequations}
\begin{align}
  \boldsymbol{\mathcal{U}} &= \{ \tensorOne{u} \in \boldsymbol{H}^1(\Omega) \ \ | \ \
  \tensorOne{u}|_{\Gamma_u}=\bar{\tensorOne{u}} \}, &%\label{eq:spaceU}  \\
  \boldsymbol{\mathcal{U}}_0 &= \{ \tensorOne{u} \in \boldsymbol{H}^1(\Omega) \ \ | \ \
  \tensorOne{u}|_{\Gamma_u}=\tensorOne{0} \}, \label{eq:spaceH1_0}  \\
  \boldsymbol{\mathcal{Q}} &= \{\tensorOne{q} \in \boldsymbol{H}(\text{div};\Omega) \ \ | \ \
  \tensorOne{q} \cdot \tensorOne{n}|_{\Gamma_q}=\bar{q} \}, &%\label{eq:spaceQ}  \\
  \boldsymbol{\mathcal{Q}}_0 &= \{\tensorOne{q} \in \boldsymbol{H}(\text{div};\Omega) \ \ | \ \
  \tensorOne{q} \cdot \tensorOne{n}|_{\Gamma_q}=0 \}, \label{eq:spaceHdiv0}   \\
  \mathcal{P} &= \{p \in L^2(\Omega) \}, \label{eq:spaceL2}   
\end{align}\label{eq:functionSpaces}
\end{subequations}
\end{linenomath}
the weak form of the IBVP \eqref{eq:IBVP} reads: find $\{ \tensorOne{u}(t),\tensorOne{q}(t), p(t) \} \in \boldsymbol{\mathcal{U}} \times \boldsymbol{\mathcal{Q}} \times \mathcal{P}$ such that $\forall t \in \mathcal{I}$: 
\begin{linenomath}
\begin{subequations}
\begin{align}
  &{(\nabla^s \tensorTwo{\eta}, \tensorFour{C}_{\text{dr}}:\nabla^s \tensorOne{u})}_{\Omega}-{(\text{div} \; \tensorTwo{\eta},bp)}_{\Omega} 
  ={ ( \tensorTwo{\eta}, \bar{\tensorOne{t}} ) }_{\Gamma_\sigma} &&\forall \tensorTwo{\eta} \in \boldsymbol{\mathcal{U}}_0 , \label{momentumBalanceW}\\
  &{(\tensorTwo{\phi}, \mu \tensorTwo{\kappa}^{-1} \cdot \tensorOne{q})}_{\Omega} - {(\text{div} \; \tensorTwo{\phi}, p)}_{\Omega} 
  = - {(\tensorTwo{\phi} \cdot \tensorOne{n}, \bar{p})}_{\Gamma_p}  &&\forall \tensorTwo{\phi} \in \boldsymbol{\mathcal{Q}}_0, \label{darcyW} \\
  &{(\chi, b \; \text{div} \; \dot{\tensorOne{u}})}_{\Omega} + {(\chi, \text{div} \; \tensorOne{q})}_{\Omega} + {(\chi, S_{\epsilon} \dot{p})}_{\Omega} = 
  {(\chi,f)}_{\Omega} &&\forall \chi \in L^2(\Omega),\label{massBalanceW}
\end{align}\label{eq:IBVP_cW}\null
\end{subequations}
\end{linenomath}
where ${(\cdot,\cdot)}_{\Omega}$ denote the inner products of scalar functions in $L^2(\Omega)$, vector functions in ${[L^2(\Omega)]}^d$, or second-order tensor functions in ${[L^2(\Omega)]}^{d \times d}$, as appropriate, and ${(\cdot, \cdot)}_{\Gamma_{*}}$ denote the inner products of scalar functions or vector functions on the boundary $\Gamma_{*}$.
For the analysis of the well-posedness of the Biot continuous problem \eqref{eq:IBVP} in weak form \eqref{eq:IBVP_cW} based on the displacement-velocity-pressure formulation, the reader is referred to~\cite{Lip02}.

A widely-used discrete version of the weak form~\eqref{eq:IBVP_cW} is based on low-order elements.
Precisely, lowest-order continuous ($\mathbb{Q}_1$), lowest-order Raviart-Thomas ($\mathbb{RT}_0$), and piecewise constant ($\mathbb{P}_0$) spaces are often used for the approximation of displacement, Darcy’s velocity, and fluid pore pressure, respectively.
The attractive features of this choice are element-wise mass conservation and robustness with respect to highly heterogeneous hydromechanical properties, such as high-contrast permeability fields typically encountered in real-world applications.
Another attractive feature stems from the hybridization of the mixed three-field formulation, as proposed for instance in~\cite{NiuRuiSun19}.
The hybridized formulation is obtained by (i) considering one degree of freedom per edge/face per element for the normal component of Darcy's velocity, and (ii) introducing one Lagrange  multiplier on edges/faces in the computational mesh, i.e. an interface pressure, to enforce velocity continuity.
The main advantage of the hybrid formulation is that it is amenable to static condensation.

Unfortunately, the $\mathbb{Q}_1$-$\mathbb{RT}_0$-$\mathbb{P}_0$ discretization spaces do not intrinsically satisfy the LBB condition in the undrained/incompressible limit~\cite{Lip02,HagOsnLan12b}.
This can result in spurious modes for the pressure, with non physical oscillations of the discrete solution.
Different stabilization strategies have been proposed in the literature.
They can be
essentially classified in two groups based on whether they: (1) enrich the discretization spaces to guarantee the LBB condition, or (2) introduce a proper stabilization term to restore the saddle-point problem solvability. In the context of Biot's poroelasticity, the first strategy is followed for instance in~\cite{Rod_etal18,NiuRuiHu19}, where proper bubble functions are used to enlarge the space used for the displacement approximation.
This is a mathematically elegant and robust stabilization technique, but it can negatively impact the algebraic structure of the resulting discrete problem, with a possible degradation of the solver computational efficiency.
The second approach, used for instance in~\cite{HonMalFerJan18,CamWhiBor19}, has a much smaller impact on the algebraic structure of the problem, but depends on the choice of appropriate stabilization coefficients that typically introduce some numerical diffusion. Such coefficients should be properly tuned to guarantee the stabilization effectiveness with no detrimental effect on the solution accuracy.  
In this work, we adopt the second approach by proposing a local pressure jump stabilization technique based on a macro-element approach \cite{SilKec90,CamWhiBor19} that is applicable to both mixed and mixed hybrid formulations.

Then, we concentrate on the efficient solution of the non-symmetric algebraic systems obtained by the application of the stabilized formulation.
Several strategies have been already developed for the two-field and mixed three-field formulations, with most of the efforts towards preconditioned Krylov solvers and multigrid methods
\cite{Lip02,Kuz_etal03,Gas_etal04,BerFerGam07,BerFerGam08,FerCasGam10,WhiBor11,AxeBlaByc12,TurArb14,
Luo_etal15,CasWhiFer16,GasRod17,Luo_etal17,LeeMarWin17,Adl_etal18,HonKra18,
FerFraJanCasTch19,FriCasFer19,Adl_etal20,Bui_etal20}. 
Some authors also focused on
sequential-implicit approaches \cite{JhaJua07,KimTchJua11a,KimTchJua11b,MikWhe13,GirKumWhe16,Alm_etal16,
Bot_etal17,Bor_etal18,DanGanWhe18,DanWhe18,Hon_etal18}, where the discrete poromechanical equilibrium equation and the Darcy flow sub-problem are addressed independently, iterating until convergence.
Here, a class of block-triangular preconditioners for accelerating the iterative convergence by Krylov subspace methods is proposed for the stabilized mixed hybrid approach.
We prove that the hybridization of the classical three-field mixed formulation brings better algebraic properties for the resulting discrete problem, which are exploited by the proposed iterative solver.
Performance and robustness of the algorithms are demonstrated in weak and strong scaling studies including both theoretical and field application benchmarks.
Finally, a few concluding remarks close the presentation.

\section{Fully-discrete model}
Let us consider a non-overlapping partition $\mathcal{T}_h$ of the domain $\Omega$ consisting of $n_T$ quadrilateral ($d=2$) or hexahedral ($d=3$) elements.
Let $\mathcal{E}_h$ be the collection of edges ($d=2$) or faces ($d=3$) of elements $T \in \mathcal{T}_h$. Denote with $\tensorOne{n}_e$ the outer normal vector from $e \in \partial T$, where $\partial T$ is the collection of the edges or faces belonging to $T$.
Time integration is performed with the Backward Euler method.
The interval $\mathcal{I}$ is partitioned into $N$ subintervals $\mathcal{I}_n=(t_{n-1},t_n)$, $n = 1,\ldots,N$, where $\Delta t=t_n-t_{n-1}$.

\subsection{Mixed Finite Element (MFE) Method}
\label{sec:MFE}
First, we define the finite dimensional counterpart of the spaces given in \eqref{eq:functionSpaces}:
\begin{linenomath}
\begin{subequations}
\begin{align}
  \boldsymbol{\mathcal{U}}^h &= \{ \tensorOne{u}^h \in \boldsymbol{\mathcal{U}} \ \ | \ \
  \tensorOne{u}^h|_T \in {[\mathbb{Q}_1(T)]}^{d}, \; \forall T \in \mathcal{T}_h \}, &% \label{eq:spaceU_h}  \\
  \boldsymbol{\mathcal{U}}^h_0 &= \{ \tensorOne{u}^h \in \boldsymbol{\mathcal{U}}_0 \ \ | \ \
  \tensorOne{u}^h|_T \in {[\mathbb{Q}_1(T)]}^{d}, \; \forall T \in \mathcal{T}_h \}, \label{eq:spaceU_h0} \\
  \boldsymbol{\mathcal{Q}}^h &= \{\tensorOne{q}^h \in \boldsymbol{\mathcal{Q}} \ \ | \ \
  \tensorOne{q}^h|_T \in {[\mathbb{RT}_0(T)]}, \; \forall T \in \mathcal{T}_h \}, &%\label{eq:spaceQ_h} \\
  \boldsymbol{\mathcal{Q}}^h_0 &= \{\tensorOne{q}^h \in \boldsymbol{\mathcal{Q}}_0 \ \ | \ \
  \tensorOne{q}^h|_T \in {[\mathbb{RT}_0(T)]}, \; \forall T \in \mathcal{T}_h \},  \label{eq:spaceQ_h0}\\
  \mathcal{P}^h &=\{ p^h \in L^{2} \ \ | \ \
  p^h|_T \in {[\mathbb{P}_0(T)]}, \; \forall T \in \mathcal{T}_h \}, \label{eq:spaceP_h}
\end{align}\label{eq:functionSpaces_D}\null
\end{subequations}
\end{linenomath}
with $\mathbb{Q}_1(T)$ the mapping to $T$ of the space of bilinear polynomials on the unit square ($d=2$) or trilinear polynomials on the unit cube ($d=3$), $\mathbb{RT}_0(T)$ the lowest-order Raviart-Thomas space and $\mathbb{P}_0(T)$ the space of piecewise constant functions in $T$.
Using the definitions~\eqref{eq:functionSpaces_D}, the fully discrete weak form of the IBVP \eqref{eq:IBVP} may be stated as follows: given $\{\tensorOne{u}_0, \tensorOne{q}_0, p_0\} $, find $\{\tensorOne{u}^h_n, \tensorOne{q}^h_n, p^h_n\} \in \boldsymbol{\mathcal{U}}^h \times \boldsymbol{\mathcal{Q}}^h \times \mathcal{P}^h$ such that for $n=\{1,\ldots,N\}$
\begin{linenomath}
\begin{subequations}
\begin{align}
  &{(\nabla^s \tensorTwo{\eta}^{h}, \tensorFour{C}_{\text{dr}}:\nabla^s \tensorOne{u}^h_n)}_{\Omega}
   -{(\text{div}\;\tensorTwo{\eta}^{h},bp^h_n)}_{\Omega} 
   ={(\tensorTwo{\eta}^h, \bar{\tensorOne{t}}_n)}_{\Gamma_\sigma} &&\forall \tensorTwo{\eta}^{h} \in \boldsymbol{\mathcal{U}}^h_0,\\
  &{(\tensorTwo{\phi}^{h}, \mu \tensorTwo{\kappa}^{-1} \cdot \tensorOne{q}^h_n)}_{\Omega} - {(\text{div} \; \tensorTwo{\phi}^{h}, p^h_n)}_{\Omega} 
   = - {(\tensorTwo{\phi}^{h} \cdot \tensorOne{n}, \bar{p}_n)}_{\Gamma_p}  &&\forall \tensorTwo{\phi}^{h} \in \boldsymbol{\mathcal{Q}}^h_0, \\
  &{(\chi^{h}, b \; \text{div} \; {\tensorOne{u}}^h_n)}_{\Omega} +\Delta t {(\chi^{h}, \text{div} \; \tensorOne{q}^h_n)}_{\Omega} 
   + {(\chi^{h}, S_{\epsilon} {p}^h_n)}_{\Omega} = {(\chi^{h},\tilde{f}_n)}_{\Omega} &&\forall \chi^{h} \in \mathcal{P}^h,
\end{align}\label{eq:IBVP_W}\null{}
\end{subequations}
\end{linenomath}
where $\tilde{f}_n =b \; \text{div} \; {\tensorOne{u}}^h_{n-1} + S_{\epsilon} {p}^h_{n-1} + \Delta t f_n$.

Let $\{ \tensorOne{\eta}_i \}_{i \in \mathcal{N}_u \cup \overline{\mathcal{N}}_u}$ be the standard vector nodal basis functions for $\boldsymbol{\mathcal{U}}^h$, with $\mathcal{N}_u$ and $\overline{\mathcal{N}}_u$ the set of indices of basis function vanishing on $\Gamma_u$ and having support on $\Gamma_u$, respectively.
Let $\{ \tensorOne{\phi}_j \}_{j \in \mathcal{N}_q \cup \overline{\mathcal{N}}_q}$ be the edge-/face-based basis functions for $\boldsymbol{\mathcal{Q}}^h$, with $\mathcal{N}_q$ and $\overline{\mathcal{N}}_q$ the set of indices of basis functions vanishing on $\Gamma_q$ and having support on $\Gamma_q$, respectively.
Let $\{ \chi_k \}_{k \in \mathcal{N}_p}$ be the basis for $\mathcal{P}^h$, with $\chi_k$ the characteristic function of the $k$-th element $T_k \in \mathcal{T}_h$ such that $\chi_k(\tensorOne{x}) = 1$ if $\tensorOne{x} \in T_k$, $\chi_k(\tensorOne{x}) = 0$ if $\tensorOne{x} \notin T_k$.
Thus, discrete approximations for the displacement, Darcy's velocity, and pressure are expressed as
\begin{linenomath}
\begin{align}
  \tensorOne{u}^h_n(\tensorOne{x}) &=
  \underbrace{\sum_{i \in \mathcal{N}_u } \tensorOne{\eta}_i(\tensorOne{x}) u_{i,n}}_{:= \mathring{\tensorOne{u}}^h_n} +
  \underbrace{\sum_{i \in \overline{\mathcal{N}}_u } \tensorOne{\eta}_i(\tensorOne{x}) \bar{u}_{i,n}}_{:= \bar{\tensorOne{u}}^h_n},
  &546
  \tensorOne{q}^h_n(\tensorOne{x}) &=
  \underbrace{\sum_{j \in \mathcal{N}_q } \tensorOne{\phi}_j(\tensorOne{x}) q_{j,n}}_{:= \mathring{\tensorOne{q}}^h_n} +
  \underbrace{\sum_{j \in \overline{\mathcal{N}}_q } \tensorOne{\phi}_j(\tensorOne{x}) \bar{q}_{j,n}}_{:= \bar{\tensorOne{q}}^h_n},
  &
  p^h_n(\tensorOne{x}) &=
  \sum_{k \in \mathcal{N}_{p} } {\chi_k}(\tensorOne{x}) p_{k,n}.
  \label{eq:u_q_p_h}
\end{align}
\end{linenomath}
The unknown nodal displacement components $\{u_{i,n}\}$, edge-/face-centered Darcy's velocity components $\{ q_{j,n} \}$, and cell-centered pressures $\{ p_{k,n} \}$ at time level $t_n$ are collected in vectors $\Vec{u}_n \in \mathbb{R}^{n_u}$, $\Vec{q}_n \in \mathbb{R}^{n_q}$, and $\Vec{p}_n \in \mathbb{R}^{n_p}$, with $n_u = |\mathcal{N}_u|$, $n_q = |\mathcal{N}_q|$, and $n_p = |\mathcal{N}_p| = n_T$.
Note that $\tensorOne{u}^h_n$ is given as superposition of function $\mathring{\tensorOne{u}}^h_n$, which honors homogeneous Dirichlet conditions on $\Gamma_\vec{u} \times \mathcal{I}$, and function $\bar{\tensorOne{u}}^h_n$, which provides a lifting of an approximation of the displacement Dirichlet boundary datum \eqref{momentumBalanceS_DIR}.
A similar superposition is used to express $\tensorOne{q}^h_n$.
Hence, $\{ \tensorOne{\eta}_i \}_{i \in \mathcal{N}_u }$ and $\{ \tensorOne{\phi}_j \}_{j \in \mathcal{N}_q}$ are a basis for $\boldsymbol{\mathcal{U}}^h_0$ and $\boldsymbol{\mathcal{Q}}^h_0$, respectively.

Requiring that $\{ \tensorOne{u}^h_n, \tensorOne{q}^h_n, p^h_n \}$ given in \eqref{eq:u_q_p_h} satisfy \eqref{eq:IBVP_W} for each basis function of $\boldsymbol{\mathcal{U}}^h_0$, $\boldsymbol{\mathcal{Q}}^h_0$, and $\mathcal{P}^h$ yields the matrix form of variational problem \cite{CasWhiFer16}:
\begin{linenomath}
\begin{align}\label{eq:mfeSys}
  \blkMat{A}_{M} \blkVec{x} &= \blkMat{b}
  \quad
  \text{with}
  \quad
  \blkMat{A}_{M} = 
  \begin{bmatrix}
    \Mat{A}\sub{uu} &      0          & \Mat{A}\sub{up} \\  
         0          & \Mat{A}\sub{qq} & \Mat{A}\sub{qp} \\
    \Mat{A}\sub{pu} & \Delta t \Mat{A}\sub{pq} & \Mat{A}\sub{pp}
  \end{bmatrix}, 
  \quad
  \blkVec{x} =
  \begin{bmatrix}
    \Vec{u}_n \\
    \Vec{q}_n \\
    \Vec{p}_n
  \end{bmatrix},
  \quad
  \blkVec{b}=
  \begin{bmatrix}
    \Vec{f}_u \\
    \Vec{f}_{q} \\
    \Vec{f}_p
  \end{bmatrix},
\end{align}
\end{linenomath}
where $\Mat{A}\sub{pu} = - \Mat{A}\sub{up}^T$ and $\Mat{A}\sub{pq} = - \Mat{A}\sub{qp}^T$.
Note that $A_{uu} \in \mathbb{R}^{n_u \times n_u}$ and $A_{qq} \in \mathbb{R}^{n_q \times n_q}$ are symmetric and positive definite (SPD) matrices, whereas $A_{pp} \in \mathbb{R}^{n_p \times n_p}$ is a diagonal matrix  with non negative entries.
The explicit expressions of matrices in \eqref{eq:mfeSys} are given in~\ref{app:mat}.

\subsection{Mixed Hybrid Finite Element (MHFE) Method}
\label{sec:MHFE}
The mixed hybrid finite element formulation is obtained by using discontinuous piecewise polynomial functions for Darcy's velocity and enforcing the continuity of the normal fluxes along inter-element edges or faces with the aid of Lagrange multipliers.
We introduce the finite-dimensional Sobolev spaces:
\begin{linenomath}
\begin{subequations}
\begin{align}
  \boldsymbol{\mathcal{W}}^h &= \{\tensorOne{w}^h \in [L^{2}(\Omega)]^d \ \ | \ \
   \tensorOne{w}^h|_T \in {[\mathbb{RT}_0(T)]}, \; \forall T \in \mathcal{T}_h  \}\label{eq:spaceW}  \\
  \boldsymbol{\mathcal{B}}^h &=\{ \pi^h \in L^{2}(e) \ \ | \ \
   \pi^h|_{\Gamma_p}=\bar{p}, \; \pi^h|_e \in {[\mathbb{P}_0(e)]}, \; \forall e \in \mathcal{E}_h \}\label{eq:spaceB} \\
  \boldsymbol{\mathcal{B}}^h_0 &=\{ \pi^h \in L^{2}(e) \ \ | \ \
   \pi^h|_{\Gamma_p}=0, \; \pi^h|_e \in {[\mathbb{P}_0(e)]}, \; \forall e \in \mathcal{E}_h \}\label{eq:spaceB0}
\end{align}\label{eq:functionSpaces_DH}\null
\end{subequations}
\end{linenomath}
where $L^2(e)$ denotes the set of square integrable functions on the element edge or face $e$.
Hence, the fully-discrete variational problem now becomes: given $\{\tensorOne{u}_0, \tensorOne{w}_0, p_0, \pi_0\} $, find $\{\tensorOne{u}^h_n, \tensorOne{w}^h_n, p^h_n, \pi^h_n \} \in \boldsymbol{\mathcal{U}}^h \times \boldsymbol{\mathcal{W}}^h \times \mathcal{P}^h \times \mathcal{B}^h$ such that for $n=\{1,\ldots,N\}$
\begin{linenomath}
\begin{subequations}
\begin{align}
  &{(\nabla^s \tensorTwo{\eta}^h, \tensorFour{C}_{\text{dr}}:\nabla^s \tensorOne{u}^h_n)}_{\Omega}-{(\text{div}\;\tensorTwo{\eta}^h,bp^h_n)}_{\Omega} 
   ={(\tensorTwo{\eta}^h, \bar{\tensorOne{t}}_{n})}_{\Gamma_\sigma} &&\forall \tensorTwo{\eta}^h \in \boldsymbol{\mathcal{U}}^h_0,\\
  &{(\tensorTwo{\varphi}^h, \mu \tensorTwo{\kappa}^{-1} \cdot \tensorOne{w}^h_n)}_{\Omega} - \sum_{T\in\mathcal{T}_h}\left [{(\text{div} \; 
   \tensorTwo{\varphi}^h, p^h_n)}_{T} 
   - {(\tensorTwo{\varphi}^h \cdot \tensorOne{n}_e, \pi^h_n)}_{\partial T}\right ] = 0 
   &&\forall \tensorTwo{\varphi}^h \in \boldsymbol{\mathcal{W}}^h, \\
  &{(\chi^{h}, b \; \text{div} \; {\tensorOne{u}}^h_n )}_{\Omega} + \Delta t \sum_{T\in\mathcal{T}_h} {(\chi^{h}, \text{div} \; \tensorOne{w}^h_n)}_{T} + {(\chi^h, S_{\epsilon} {p}^h_n)}_{\Omega} = 
   {(\chi^{h},\tilde{f}_n)}_{\Omega} &&\forall \chi^{h} \in \mathcal{P}^h, \\
  &\sum_{T \in \mathcal{T}_h} {-(\zeta^h, \tensorOne{w}_n^h \cdot \tensorOne{n}_e)}_{\partial T}={-(\zeta^h,\bar{q}_n)}_{\Gamma_q} && \forall \zeta^h \in \mathcal{B}_0^h.
\end{align}\label{eq:IBVP_WH}\null
\end{subequations}
\end{linenomath}
Let $\{ \tensorOne{\varphi}_j \}_{j \in \mathcal{N}_w}$ be the $2d \cdot n_T$ basis functions for $\boldsymbol{\mathcal{W}}^h$, where $2d$ is the number of edges (respectively, faces) in a quadrilateral (respectively, hexahedral) element.
Let $\{ \zeta_{\ell} \}_{\ell \in \mathcal{N}_{\pi} \cup \overline{\mathcal{N}}_{\pi}}$ be the basis for $\mathcal{B}^h$, with $\zeta_{\ell}$ the characteristic function of the $\ell$-th edge/face $e_{\ell} \in \mathcal{E}_h$ such that $\zeta_{\ell}(\tensorOne{x}) = 1$ if $\tensorOne{x} \in e_{\ell}$, $\zeta_{\ell}(\tensorOne{x}) = 0$ if $\tensorOne{x} \notin e_{\ell}$.
Sets $\mathcal{N}_{\pi}$ and $\overline{\mathcal{N}}_{\pi}$ identify the indices of basis functions vanishing on $\Gamma_p$ and having support on $\Gamma_p$, respectively.
The same expressions given in \eqref{eq:u_q_p_h} for $\tensorOne{u}^h_n$ and $p^h_n$ are used.
The following approximation for the discontinuous Darcy velocity $\tensorOne{w}^h_n$ and interface pressure $\pi^h_n$ are introduced:
\begin{linenomath}
\begin{align}
  \tensorOne{w}^h_n(\tensorOne{x}) &=
  \sum_{j \in \mathcal{N}_{w} } \tensorOne{\varphi}_j(\tensorOne{x}) w_{j,n}, &
  \pi^h_n(\tensorOne{x}) &=
  \underbrace{\sum_{\ell \in \mathcal{N}_{\pi} } \zeta_\ell(\tensorOne{x}) \pi_{\ell,n}}_{:= \mathring{\pi}^h_n} +
  \underbrace{\sum_{\ell \in \overline{\mathcal{N}}_{\pi} } \zeta_\ell(\tensorOne{x}) \bar{p}_{\ell,n}}_{:= \bar{\pi}^h_n}.
  \label{eq:w_pi_h}
\end{align}
\end{linenomath}
The unknown edge-/face-centered Darcy's velocity components $\{ w_{j,n} \}$ and pressures $\{ \pi_{\ell,n} \}$ at time level $t_n$ are collected in vectors $\Vec{w}_n \in \mathbb{R}^{n_w}$, and $\Vec{\pi}_n \in \mathbb{R}^{n_\pi}$, with $n_w = |\mathcal{N}_w| = 2d \cdot n_T$, and $n_{\pi} = |\mathcal{N}_{\pi}|$.
As in \eqref{eq:u_q_p_h} for $\tensorOne{u}^h_n$ and $\tensorOne{q}^h_n$, the pressure field on the mesh skeleton is expressed as sum of function $\mathring{\pi}^h_n$, which satisfies homogeneous pressure conditions on $\Gamma_p \times \mathcal{I}$, and function $\bar{\pi}^h_n$, which provides a lifting of an approximation of the pressure Dirichlet boundary datum \eqref{massBalanceS_DIR}.
Here, $\{\zeta_\ell\}_{\ell \in \mathcal{N}_{\pi}}$ represents a basis for $\mathcal{B}^h_0$.

Requiring that $\{ \tensorOne{u}^h_n, \tensorOne{w}^h_n, p^h_n, \pi^h_n \}$ given in \eqref{eq:u_q_p_h} and \eqref{eq:w_pi_h} satisfy \eqref{eq:IBVP_WH} for each basis function of $\boldsymbol{\mathcal{U}}^h_0$, $\boldsymbol{\mathcal{W}}^h$, $\mathcal{P}^h$, and $\boldsymbol{\mathcal{B}}^h_0$, produces the following block linear system:
\begin{linenomath}
\begin{align}\label{eq:hybridSys}
  \blkMat{A}_{H} \blkVec{x} &= \blkMat{b}
  \quad
  \text{with}
  \quad
  \blkMat{A}_{H} = 
  \begin{bmatrix}
    A_{uu} &    0            & A_{up} &    0         \\
      0    &  A_{ww}         & A_{wp} & A_{w\pi} \\  
    A_{pu} & \Delta t A_{pw} & A_{pp} &    0         \\
      0    &  A_{\pi w}  &  0     &    0 
  \end{bmatrix}, 
  \quad
  \blkVec{x} =
  \begin{bmatrix}
    \Vec{u}_n \\
    \Vec{w}_n \\
    \Vec{p}_n \\
    \Vec{\pi}_n
  \end{bmatrix},
  \quad
  \blkVec{b} =
  \begin{bmatrix}
    \Vec{f}_u \\
    \Vec{f}_w \\
    \Vec{f}_{p,H} \\
    \Vec{f}_{\pi}
  \end{bmatrix},
\end{align}
\end{linenomath}
with $A_{pw}=-A^{T}_{wp}$, $A_{\pi w}=-A^{T}_{w \pi}$.
The matrix $A_{ww}$ is block diagonal and composed of $n_T$ SPD blocks of size $2d$.
Hence, the block system~\eqref{eq:hybridSys} can be reduced by static condensation, namely
\begin{linenomath}
\begin{align}\label{eq:hmfeSys}
  \blkMat{A}_{H} \blkVec{x} &= \blkMat{b}
  \quad
  \text{with}
  \quad
  \blkMat{A}_{H} = 
  \begin{bmatrix}
    A_{uu} &          A_{up}                    & 0                                      \\  
    A_{pu} &  
    A_{pp} - \Delta t A_{pw} A_{ww}^{-1} A_{wp} & -\Delta t A_{pw} A_{ww}^{-1} A_{w \pi}  \\
    0      & -A_{\pi w} A_{ww}^{-1} A_{wp}      & -A_{\pi w} A_{ww}^{-1} A_{w \pi}     
  \end{bmatrix}, 
  \quad
  \blkVec{x} =
  \begin{bmatrix}
    \Vec{u}_n \\
    \Vec{p}_n \\
    \Vec{\pi}_n
  \end{bmatrix},
  \quad
  \blkVec{b} =
  \begin{bmatrix}
    \Vec{f}_u \\
    \Vec{f}_{p,H} - \Delta t A_{pw } A_{ww}^{-1} \Vec{f}_w \\
    \Vec{f}_{\pi} - A_{\pi w } A_{ww}^{-1} \Vec{f}_w
  \end{bmatrix},
\end{align}
\end{linenomath}
with the final matrix written in a more compact form as:
\begin{linenomath}
\begin{align}
  \blkMat{A}_{H} = 
  \begin{bmatrix}
    A_{uu} &      A_{up}   &          0                   \\  
    A_{pu} & \bar{A}_{pp}  & \Delta t A_{p \pi}       \\
    0      & A_{\pi p} &          A_{\pi \pi}     
  \end{bmatrix}.
  \label{eq:A_33H}
\end{align}
\end{linenomath}
From an implementation point of view, the block matrix~\eqref{eq:A_33H} is constructed directly assembling matrices $\bar{A}_{pp}$, $A_{p \pi}$, and $A_{\pi \pi}$ from element contributions.
Once $\Vec{p}_n$ and $\Vec{\pi}_n$ have been computed, a cell-based reconstruction is used to obtain $\Vec{w}_n$.
Note that $\bar{A}_{pp}$ is a diagonal matrix, while the sparsity patterns of $A_{\pi \pi}$ and $A_{p \pi}$ are the same as $A_{q q}$ and $A_{p q}$, respectively.
The explicit expression for the matrices and right-hand-sides are provided in~\ref{app:mat}.

\section{Stabilized MFE and MHFE Methods}
The selected spaces for the mixed and mixed hybrid formulation can be unstable.
This can occur in the presence of incompressible fluid and solid constituents ($S_{\epsilon} \rightarrow 0$) and  undrained conditions---i.e., $\tensorOne{q}\simeq\tensorOne{0}$ for either low permeability ($\tensorTwo{\kappa} \rightarrow \tensorTwo{0}$) or small time-step size ($\Delta t \rightarrow 0$).
In this situation, the IBVP \eqref{eq:IBVP} degenerates to an undrained steady-state poroelastic problem.
Assuming without loss of generality no fluid source term, both discrete weak form \eqref{eq:IBVP_W} and \eqref{eq:IBVP_WH} become: find  
$\{\tensorOne{u}^h, p^h\} \in \boldsymbol{\mathcal{U}}^h \times \mathcal{P}^h$ such that
\begin{linenomath}
\begin{subequations}
\begin{align}
  &{(\nabla^s \tensorTwo{\eta}^{h}, \tensorFour{C}_{\text{dr}}:\nabla^s \tensorOne{u}^h)}_{\Omega}
   -{(\text{div}\;\tensorTwo{\eta}^{h},b p^h)}_{\Omega} 
   ={(\tensorTwo{\eta}^h, \bar{\tensorOne{t}})}_{\Gamma_\sigma} &&\forall \tensorTwo{\eta}^{h} \in \boldsymbol{\mathcal{U}}^h_0,\\
  &{(\chi^{h}, b \; \text{div} \; {\tensorOne{u}}^h)}_{\Omega} = 0 &&\forall \chi^{h} \in \mathcal{P}^h, \label{eq:IBVP_W_undrained_const}
\end{align}\label{eq:IBVP_W_undrained}\null
\end{subequations}
\end{linenomath}
that is both system \eqref{eq:mfeSys} and \eqref{eq:hmfeSys} reduce to
\begin{linenomath}
\begin{equation}
  \begin{bmatrix}
    A_{uu} & A_{up} \\
    A_{pu} & 0
  \end{bmatrix}
  \begin{bmatrix}
    \Vec{u} \\
    \Vec{p}
  \end{bmatrix}
  =
  \begin{bmatrix}
    \Vec{f}_u \\
    \Vec{0}
  \end{bmatrix}.
  \label{eq:SysSaddlePoint}
\end{equation}
\end{linenomath}
Stability of this saddle point system requires the spaces $\boldsymbol{\mathcal{U}}^h_0$ and $\mathcal{P}^h$ to fulfill the discrete inf-sup condition \cite{BreBat90}, i.e. the following solvability condition must hold true:
\begin{linenomath}
\begin{align}
  &{(\text{div} \; \tensorTwo{\eta}^h, bp^h )}_{\Omega} = 0 \quad \forall \tensorTwo{\eta}^h \in \boldsymbol{\mathcal{U}}^h_0 \quad \Longrightarrow \quad p^h = \text{const}.\label{eq:solv}
\end{align}
\end{linenomath}
Unfortunately, lowest-order continuous finite elements for the displacement field combined with a piecewise-constant interpolation for the pressure do not satisfy \eqref{eq:solv}, hence spurious modes can appear in the pressure solution.
As a remedy, we use a pressure-jump stabilization technique following the approach proposed  in \cite{SilKec90} in the context of Stokes problem.
This technique relies on the construction of macro-elements, on which the discrete solvability condition~\eqref{eq:solv} is satisfied.
In such a case, the macro-element is called stable and it can be proved that the inf-sup condition holds true for any grid constructed by patching together stable macro-elements~\cite{ElmSilWat14}.

\begin{figure}
\centering
\hfill
\begin{subfigure}[b]{.45\linewidth}
\centering
\begin{tikzpicture} [scale=1]
    \def\slice{2.0}
    \def\side{4}

    % draw cube
    \draw [line width=0.25mm,color=black] (0,0,\side) -- (\side,0,\side) -- (\side,\side,\side) -- (0,\side,\side) -- (0,0,\side);

    % draw axes
    \draw[line width=0.25mm,densely dotted,color=black] (\slice,0,\side) -- (\slice,\side,\side); 
    \draw[line width=0.25mm,densely dotted,color=black] (0,\slice,\side) -- (\side,\slice,\side); 

    % draw nodes
    \draw[fill=black] (\side,\side,\side) circle (0.2em);
    \draw[fill=black] (\side,0,\side) circle (0.2em);
    \draw[fill=black] (0,0,\side) circle (0.2em);
    \draw[fill=black] (0,\side,\side) circle (0.2em);
    \draw[fill=black] (\slice,\slice,\side) circle (0.2em);

    \draw[fill=black] (\side,\slice,\side) circle (0.2em);
    \draw[fill=black] (\slice,\side,\side) circle (0.2em);
    \draw[fill=black] (\side,\slice,\side) circle (0.2em);
    \draw[fill=black] (\slice,0,\side) circle (0.2em);
    \draw[fill=black] (0,\slice,\side) circle (0.2em);

    % Legend 
    %\node at (1.2*\slice,1.2*\slice,\side) {$\Gamma_{M}$};
    %\node at (1.1*\side,1.5*\slice,\side) {$\Gamma^{\partial}_{M}$};
    %\node at (\side/8,1.1*\side,\side) {$M$};
    %\draw[fill=gray!10] (0,-\slice/4,\side) circle (0.5em);
    %\draw[fill=white] (\side/2,-\slice/4,\side) circle (0.5em);
    \draw [line width=0.25mm,color=black] (6.25*\side/10,-\slice/4,\side) -- (7.5*\side/10,-\slice/4,\side); 
    \draw [line width=0.25mm,densely dotted,color=black] (1.25*\side/10,-\slice/4,\side) -- (2.5*\side/10,-\slice/4,\side); 
    \node at (2*\side/6,-\slice/4,\side) {$\Gamma_{M}$};
    \node at (5*\side/6,-\slice/4,\side) {$\Gamma^{\partial}_{M}$};
\end{tikzpicture}
\caption{}
\end{subfigure}
\hfill
\begin{subfigure}[b]{.45\linewidth}
\centering
\begin{tikzpicture}[scale=.9]
    \def\slice{2.0}
    \def\side{4}

    % draw slice
    \filldraw[color=red!50] (0,\slice,0) -- (0,\slice,\side) -- (\side,\slice,\side) -- (\side,\slice,0) -- cycle;
    \filldraw[color=red!50] (0,0,\slice) -- (\side,0,\slice) -- (\side,\side,\slice) -- (0,\side,\slice) -- cycle;
    \filldraw[color=red!50] (\slice,0,0) -- (\slice,\side,0) -- (\slice,\side,\side) -- (\slice,0,\side) -- cycle;

    %\draw (0,\slice,0) -- (0,\slice,\side) -- (\side,\slice,\side) -- (\side,\slice,0) -- cycle;
    \draw[line width=0.25mm,densely dotted] (0,\slice,\side) -- (0,\slice,0) -- (\side,\slice,0); 
    \draw (0,\slice,\side) -- (\side,\slice,\side) -- (\side,\slice,0); 

    %\draw (0,\slice,0) -- (0,\slice,\side) -- (\side,\slice,\side) -- (\side,\slice,0) -- cycle;
    \draw[line width=0.25mm,densely dotted] (\side,0,\slice) -- (0,0,\slice) -- (0,\side,\slice); 
    \draw (\side,0,\slice) -- (\side,\side,\slice) -- (0,\side,\slice); 

    %\draw (0,\slice,0) -- (0,\slice,\side) -- (\side,\slice,\side) -- (\side,\slice,0) -- cycle;
    \draw[line width=0.25mm,densely dotted] (\slice,\side,0) -- (\slice,0,0) -- (\slice,0,\side); 
    \draw (\slice,\side,0) -- (\slice,\side,\side) -- (\slice,0,\side); 

    % draw cube
    \draw (\side,0,0) -- (\side,\side,0) -- (0,\side,0);
    \draw (0,0,\side) -- (\side,0,\side) -- (\side,\side,\side) -- (0,\side,\side) -- (0,0,\side);
    \draw (\side,0,0) -- (\side,0,\side);
    \draw (\side,\side,0) -- (\side,\side,\side);
    \draw (0,\side,0) -- (0,\side,\side);
    \draw[line width=0.2mm,densely dotted] (0,0,\side) -- (0,0,0) -- (\side,0,0); 
    \draw[line width=0.2mm,densely dotted] (0,\side,0) -- (0,0,0); 

    % draw axes
    \draw[line width=0.2mm,densely dotted,color=black] (\slice,0,\slice) -- (\slice,\side,\slice); 
    \draw[line width=0.2mm,densely dotted,color=black] (0,\slice,\slice) -- (\side,\slice,\slice); 
    \draw[line width=0.2mm,densely dotted,color=black] (\slice,\slice,0) -- (\slice,\slice,\side); 

    % draw nodes
    \draw[fill=black] (\side,0,0) circle (0.2em);
    \draw[fill=black] (\side,\side,0) circle (0.2em);
    \draw[fill=black] (\side,\side,\side) circle (0.2em);
    \draw[fill=black] (\side,0,\side) circle (0.2em);
    \draw[fill=black] (0,0,\side) circle (0.2em);
    \draw[fill=black] (0,\side,\side) circle (0.2em);
    \draw[fill=black] (0,\side,0) circle (0.2em);

    \draw[fill=black] (\slice,\slice,\slice) circle (0.2em);
    \draw[fill=black] (0,\slice,\slice) circle (0.2em);
    \draw[fill=black] (\slice,0,\slice) circle (0.2em);
    \draw[fill=black] (\slice,\slice,0) circle (0.2em);
    \draw[fill=black] (\side,\slice,\slice) circle (0.2em);
    \draw[fill=black] (\slice,\side,\slice) circle (0.2em);
    \draw[fill=black] (\slice,\slice,\side) circle (0.2em);

    \draw[fill=black] (0,\side,\slice) circle (0.2em);
    \draw[fill=black] (\side,0,\slice) circle (0.2em);
    \draw[fill=black] (\side,\slice,0) circle (0.2em);
    \draw[fill=black] (\side,\slice,\side) circle (0.2em);
    \draw[fill=black] (\slice,\side,\side) circle (0.2em);
    \draw[fill=black] (\side,\slice,\side) circle (0.2em);
    \draw[fill=black] (\side,\side,\slice) circle (0.2em);
    \draw[fill=black] (\slice,0,\side) circle (0.2em);
    \draw[fill=black] (0,\slice,\side) circle (0.2em);
    \draw[fill=black] (0,\slice,0) circle (0.2em);
    \draw[fill=black] (0,0,\slice) circle (0.2em);
    \draw[fill=black] (\slice,0,0) circle (0.2em);
    \draw[fill=black] (\slice,\side,0) circle (0.2em);

    \draw[fill=black] (0,0,0) circle (0.2em);
    % Legend 
    \node at (2*\side/6,-\slice/4,\side) {$\Gamma_{M}$};
    \node at (5*\side/6,-\slice/4,\side) {$\Gamma^{\partial}_{M}$};
    \draw[black, line width=0.075em, fill=white] (3.85*\side/6,-1*\side/6,\side) rectangle (4.35*\side/6,-0.5*\side/6,\side);
    \draw[black, line width=0.075em, fill=red!50] (0.85*\side/6,-1*\side/6,\side) rectangle (1.35*\side/6,-0.5*\side/6,\side);
    %\node at (\side/8,1.3*\side,\side) {$M$};
\end{tikzpicture}
\caption{3D}
\end{subfigure}
\hfill\null
\caption{ Reference macroelement patch in 2D (a) and 3D (b). Edges and faces subject to the jump stabilization are indicated with a dotted line and in red, respectively.}\label{fig:macroel}
\end{figure}
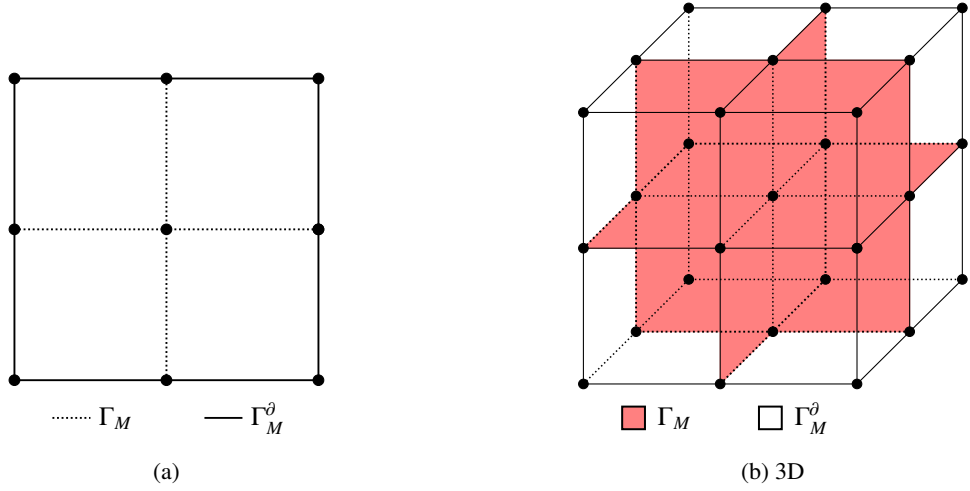

Let $\mathcal{M}_h$ be the set of macroelements, i.e. the union of four quadrilaterals in 2D and eight hexahedra in 3D.
We denote by $\Gamma^{\partial}_M$  and $\Gamma_M$  the external and internal boundary of $M$, respectively, for a macroelement $M$ (Fig.~\ref{fig:macroel}). 
The stabilization consists of relaxing the incompressibility constraint by adding the term $J(\chi^h,p^h)$ so that \eqref{eq:IBVP_W_undrained_const} becomes:
\begin{linenomath}
\begin{align}
  &{(\chi^h, b \; \text{div} \; {\tensorOne{u}}^h)}_{\Omega} + J(\chi^h, p^h) 
  = 0 &&\forall \chi^h \in \mathcal{P}^h,
  \label{eq:STporoS_W}
\end{align}%\null{}
\end{linenomath}
where 
\begin{linenomath}
\begin{align}
  J(p^h,\chi^h)&= \sum_{M\in\mathcal{M}_h} \beta_M |M| \sum_{e \in \Gamma_M} \llbracket \chi^h \rrbracket_e \llbracket p^h \rrbracket_e.
  \label{eq:J_def}
\end{align}
\end{linenomath}
In~\eqref{eq:J_def}, ${\llbracket \cdot \rrbracket}_e$ denotes the jump across the edge/face $e$, $|M|$ is the $d$-measure of $M$, and $\beta_M$ is a stabilization term depending on the physical parameters.
It is worth noticing that, unlike other stabilization techniques~\cite{Rod_etal18,NiuRuiHu19}, the pressure-jump method has also a physical interpretation.
Indeed, $J(p^h,\chi^h)$ can be regarded as a fictitious flux introduced through the inner edges or faces of each macro-element.
The local fictitious flux along $e$, which is proportional to $\beta_M$ and the jump of $p^h$, is introduced so as to compensate the spurious fluxes induced by non-physical pressure oscillations across adjacent elements.
Element-wise mass-conservation no longer holds, but is guaranteed on the macro-element, because only the jumps across the inner edges or faces are considered.
Note also that this fictitious flux is effective in undrained conditions only, becoming irrelevant in drained configurations where physical fluxes prevail.

The stabilization parameter $\beta_M$ at the macro-element level is user-specified.
As proposed in~\cite{ElmSilWat14}, an optimal candidate depends on the eigenspectrum of the local Schur complement matrix $B^M_p$ computed for the macro-element $M$:
\begin{linenomath}
\begin{align}
  B^M_p= A^M_{stab} - A^{M}_{pu} {\left[A^{{M}}_{uu}\right ]}^{-1} A^{M}_{up},
\end{align}
\end{linenomath}
where $A^M_{uu}$, $A^M_{up}$, $A^M_{pu}$ are the blocks introduced in~\eqref{eq:mfeSys} and~\eqref{eq:hybridSys} restricted to $M$, with homogeneous Dirichlet conditions for the displacements on $\Gamma^{\partial}_M$, and $A^M_{stab}$ is the matrix form of $J$ in $M$.
The key idea is to set $\beta_M$ such that the non-zero extreme eigenvalues of $B^M_p$ are not affected by the introduction of the stabilization contribution $A^M_{stab}$.
Following the analysis in~\cite{ElmSilWat14} for 2D problems, we can set $\beta_M={(b/2)}^2/(2 G +\lambda)$, where $G$ and $\lambda$ are the Lam\'e parameters on the macro-element $M$.
For 3D problems, a recent analysis has been carried out for a mixed finite element-finite volume formulation of multiphase poromechanis \cite{CamWhiBor19}.
Extending those results, we can set $\beta_M={(3 b)}^2 / [32 (\lambda + 4 G)]$.

The introduction of the stabilization terms adds the matrix $A_{stab}$ obtained from assembling $A^M_{stab}$ to the diagonal block of the discrete balance equations, yielding:
\begin{linenomath}
\begin{align}
    \blkMat{A}_{M} &= 
  \begin{bmatrix}
    \Mat{A}\sub{uu} &      0          & \Mat{A}\sub{up} \\  
         0          & \Mat{A}\sub{qq} & \Mat{A}\sub{qp} \\
    \Mat{A}\sub{pu} & \Delta t \Mat{A}\sub{pq} & \Mat{A}\sub{pp}+A_{stab}
  \end{bmatrix}, 
  &
  \blkMat{A}_{H} &= 
  \begin{bmatrix}
    A_{uu} &      A_{up}   &          0                   \\  
    A_{pu} & \bar{A}_{pp}+A_{stab}  & \Delta t A_{p \pi}       \\
    0      & A_{\pi p} &          A_{\pi \pi}     
  \end{bmatrix}.
  \label{eq:stab_A}
\end{align}
\end{linenomath}
Recalling that the pattern of the blocks $A_{qp}$ and $A_{\pi p}$ is the same---providing the face-to-element connectivity, i.e. ${[A_{\pi p}]}_{ij},{[A_{q p}]}_{ij}\neq 0$ if face $i$ belongs to element $j$---it is easy to see that the sparsity pattern of $A_{stab}$ is a subset of the sparsity pattern of $A_{pq}A_{qp}$ and $A_{p \pi} A_{\pi p}$.
Notice that the matrices $\blkMat{A}_{M}$ and $\blkMat{A}_{H}$ in~\eqref{eq:stab_A} are non-symmetric. Although they could be easily symmetrized,
in this work we prefer keeping the non-symmetry because the symmetric form would be indefinite anyway.
The topic was widely investigated for instance by Benzi et al.~\cite{BenGolLie05} for saddle-point problems, showing that the performance difference between symmetrized and non-symmetrized formulations is usually marginal.

\section{Linear solver}\label{sec:ls}
The efficient solution of the linear systems with the non-symmetric block matrices~\eqref{eq:stab_A} by Krylov subspace methods requires the development of dedicated preconditioning strategies.
A robust and effective family of preconditioners for MFE poromechanics is provided by block triangular preconditioners based on a Schur complement-approximation strategy, e.g.,~\cite{CasWhiFer16}. 
In this work, we follow a similar approach for the stabilized MHFE problem \eqref{eq:stab_A} by defining the block upper triangular factor: 
\begin{linenomath}
\begin{align}\label{eq:hybridPrec}
\blkMat{M}=
\begin{bmatrix}
A_{uu} & A_{up}& 0 \\
0 & \tilde{B}_{p}& \Delta t A_{p\pi}  \\
0 &  0 & \tilde{C}_{\pi}  
\end{bmatrix} 
\end{align}
\end{linenomath}
where $\tilde{B}_p$ is an approximation of the first-level Schur complement $B_p=\bar{A}_{pp} +A_{stab}- A_{pu} A^{-1}_{uu} A_{up}$, and $\tilde{C}_{\pi}$  is the preconditioner second-level Schur complement $A_{\pi \pi} - \Delta t A_{\pi p} \tilde{B}^{-1}_{p} A_{p \pi}$.

The following results provide information on: (i) the eigenspectrum of the right-preconditioned matrix $\blkMat{A}_{H}\blkMat{M}^{-1}$ for any choice of $\tilde{B}_p$, and (ii) the regularity of $\tilde{C}_\pi$, whose inverse is required to apply $\blkMat{M}^{-1}$.

\begin{thm}\label{th:eigen}
Let $\blkMat{A}_H$ and $\blkMat{M}$ be the matrices introduced in~\eqref{eq:stab_A} and~\eqref{eq:hybridPrec}, respectively. 
Then, the eigenvalues of $\blkMat{T}=\blkMat{A}_H \blkMat{M}^{-1}$ are either 1, with multiplicity $n_u + n_{\pi} - n_p$, or equal to $1 + \mu_i$, where $\mu_i$ are the nonzero eigenvalues of $\blkMat{Z}$:
\begin{linenomath}
\begin{align}
\blkMat{Z} &= 
\begin{bmatrix}
E_p & - \Delta t E_p A_{p \pi} \tilde{C}^{-1}_{\pi}  \\
A_{\pi p} \tilde{B}^{-1}_p & 0
\end{bmatrix},
\label{eq:Z_def}
\end{align}
\end{linenomath}
with $E_p = B_p \tilde{B}^{-1}_p - I_p$.
\end{thm}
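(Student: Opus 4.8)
The strategy is to study the preconditioned matrix $\blkMat{T} = \blkMat{A}_H \blkMat{M}^{-1}$ by writing $\blkMat{T} = \blkMat{I} + (\blkMat{A}_H - \blkMat{M})\blkMat{M}^{-1}$ and identifying the error matrix $\blkMat{N} := (\blkMat{A}_H - \blkMat{M})\blkMat{M}^{-1}$. From~\eqref{eq:stab_A} and~\eqref{eq:hybridPrec},
\begin{linenomath}
\begin{align*}
\blkMat{A}_H - \blkMat{M} =
\begin{bmatrix}
0 & 0 & 0 \\
A_{pu} & \bar{A}_{pp}+A_{stab} - \tilde{B}_p & 0 \\
0 & A_{\pi p} & A_{\pi \pi} - \tilde{C}_\pi
\end{bmatrix},
\end{align*}
\end{linenomath}
so only the last two block-rows are nonzero, and in fact the first block-column of $\blkMat{A}_H - \blkMat{M}$ together with the relation $\bar A_{pp}+A_{stab} = B_p + A_{pu}A_{uu}^{-1}A_{up}$ should be used to simplify the middle entry. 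First I would compute $\blkMat{M}^{-1}$ explicitly — it is block upper triangular, so its inverse is again block upper triangular with diagonal blocks $A_{uu}^{-1}$, $\tilde B_p^{-1}$, $\tilde C_\pi^{-1}$ and with the appropriate fill-in in the upper-right blocks — and then multiply out $\blkMat{N} = (\blkMat{A}_H - \blkMat{M})\blkMat{M}^{-1}$.

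The key algebraic point is that after this multiplication the first block-column of $\blkMat{N}$ vanishes: the $(2,1)$ entry becomes $A_{pu}A_{uu}^{-1} + (\bar A_{pp}+A_{stab}-\tilde B_p)(-\tilde B_p^{-1}A_{pu}A_{uu}^{-1})$, and substituting $\bar A_{pp}+A_{stab} = B_p + A_{pu}A_{uu}^{-1}A_{up}$ together with $\tilde B_p^{-1}A_{up}\cdots$ — actually the cleaner route is to note that $(\blkMat{A}_H-\blkMat{M})\blkMat{M}^{-1}$ has its action determined by how $\blkMat{M}^{-1}$ maps onto the $u$-block, and the structure forces the $u$-column of $\blkMat{N}$ to be zero because the first row of $\blkMat{A}_H$ equals the first row of $\blkMat{M}$. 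Hence $\blkMat{N}$ is supported only on the last two block-columns and last two block-rows, reducing to a $2\times 2$ block matrix acting on the $(p,\pi)$ space. Carrying out the remaining two-by-two block product, using $B_p\tilde B_p^{-1} - I_p = E_p$ for the $(p,p)$ entry, $A_{\pi p}\tilde B_p^{-1}$ for the $(\pi,p)$ entry, the $(p,\pi)$ entry coming from the chain $(\bar A_{pp}+A_{stab}-\tilde B_p)(-\Delta t \tilde B_p^{-1}A_{p\pi}\tilde C_\pi^{-1}) = -\Delta t E_p A_{p\pi}\tilde C_\pi^{-1}$ (modulo also the direct $A_{p\pi}$ contribution, which must be checked to cancel via the definition of $\tilde C_\pi$), and the $(\pi,\pi)$ entry $A_{\pi\pi}\tilde C_\pi^{-1} - \tilde C_\pi\tilde C_\pi^{-1} + (\text{cross term}) $ collapsing to $0$ precisely because $\tilde C_\pi = A_{\pi\pi} - \Delta t A_{\pi p}\tilde B_p^{-1}A_{p\pi}$, yields exactly $\blkMat{Z}$ as in~\eqref{eq:Z_def}, embedded in the last two block-rows and columns of $\blkMat{N}$.

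The conclusion then follows from the eigenvalue structure: $\blkMat{T} = \blkMat{I} + \blkMat{N}$ where $\blkMat{N}$ has rank at most $n_p + n_\pi$ (its range lies in the $(p,\pi)$ subspace) and acts within that subspace as $\blkMat{Z}$. Thus $\blkMat{N}$ has the eigenvalue $0$ with multiplicity at least $n_u + (n_p + n_\pi) - \operatorname{rank}(\blkMat{Z})$; since $\blkMat{Z}$ has a $0$ eigenvalue of multiplicity $n_p + n_\pi - \operatorname{rank}(\blkMat{Z})$ as well, the total multiplicity of $0$ as an eigenvalue of $\blkMat{N}$ is $n_u + n_\pi - n_p$ once one observes $\operatorname{rank}(\blkMat{Z}) \le n_p$ — this rank bound comes from the fact that the second block-row of $\blkMat{Z}$ is $A_{\pi p}\tilde B_p^{-1}$ times the first block-coordinate only, so the column space of $\blkMat{Z}$ is spanned by its first $n_p$ columns. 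The remaining eigenvalues of $\blkMat{N}$ are the nonzero $\mu_i$ of $\blkMat{Z}$, and adding $1$ gives the claim. The main obstacle I anticipate is the bookkeeping in the $2\times 2$ block multiplication — in particular verifying that the "stray" $A_{p\pi}$ and $A_{\pi p}$ terms (those not multiplied by the error factors) cancel exactly against the definitions of $\tilde B_p$ and $\tilde C_\pi$, and correctly tracking the multiplicity count $n_u + n_\pi - n_p$, which requires the rank-$\le n_p$ observation about $\blkMat{Z}$ rather than a naive dimension count.
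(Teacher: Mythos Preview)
Your overall strategy --- write $\blkMat{T}=\blkMat{I}+\blkMat{N}$ with $\blkMat{N}=(\blkMat{A}_H-\blkMat{M})\blkMat{M}^{-1}$, identify the $2\times 2$ block $\blkMat{Z}$, and count zero eigenvalues of $\blkMat{N}$ --- is exactly the paper's approach. However, two concrete steps in your execution are wrong.

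\textbf{First block-column of $\blkMat{N}$ does not vanish.} Because $\blkMat{M}$ is block \emph{upper} triangular, the first block-column of $\blkMat{M}^{-1}$ is $(A_{uu}^{-1},0,0)^T$, so the $(2,1)$ block of $\blkMat{N}$ is simply $A_{pu}A_{uu}^{-1}\neq 0$; the extra term you wrote, $(\bar A_{pp}+A_{stab}-\tilde B_p)(-\tilde B_p^{-1}A_{pu}A_{uu}^{-1})$, does not appear. Your own observation that ``the first row of $\blkMat{A}_H$ equals the first row of $\blkMat{M}$'' implies the first block-\emph{row} of $\blkMat{N}$ is zero, not the first block-column. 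The eigenvalue conclusion is unaffected --- $\blkMat{N}$ is block \emph{lower} triangular with a zero $(1,1)$ block, so its characteristic polynomial still factors as $(-\mu)^{n_u}\det(\blkMat{Z}-\mu I)$ --- but the reasoning needs to be corrected.

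\textbf{The rank bound on $\blkMat{Z}$ is $2n_p$, not $n_p$.} Your argument (``the second block-row depends only on the first block-coordinate, so the column space is spanned by the first $n_p$ columns'') is invalid: the last $n_\pi$ columns of $\blkMat{Z}$ have nonzero first block-row $-\Delta t E_p A_{p\pi}\tilde C_\pi^{-1}$, and these are \emph{not} in general linear combinations of the first $n_p$ columns because any such combination would produce a nonzero second block-row. The correct argument (as in the paper) is that $\ker(A_{p\pi})$ has dimension at least $n_\pi-n_p$, so vectors $(0,\tilde C_\pi v_\pi)$ with $v_\pi\in\ker(A_{p\pi})$ lie in $\ker(\blkMat{Z})$, giving $\operatorname{rank}(\blkMat{Z})\le (n_p+n_\pi)-(n_\pi-n_p)=2n_p$. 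Note also that your own arithmetic is inconsistent: with $\operatorname{rank}(\blkMat{Z})\le n_p$ you would get multiplicity $\ge n_u+n_\pi$ for the eigenvalue $1$, not $n_u+n_\pi-n_p$; the stated count comes out precisely from the $2n_p$ bound.
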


\begin{proof}
Recalling that the inverse of $\blkMat{M}$ reads:
\begin{linenomath}
\begin{align}
\blkMat{M}^{-1} = 
\begin{bmatrix}
  A^{-1}_{uu} & - A^{-1}_{uu} A_{up} \tilde{B}^{-1}_p & \Delta t A^{-1}_{uu} A_{up} \tilde{B}^{-1}_p A_{p \pi} \tilde{C}^{-1}_{\pi} \\
   0          &  \tilde{B}_p^{-1}                     & - \Delta t \tilde{B}^{-1}_p A_{p\pi} \tilde{C}_{\pi}^{-1}                   \\
   0          &      0                                & \tilde{C}^{-1}_{\pi}
\end{bmatrix},
\end{align}
\end{linenomath}
the matrix $\blkMat{T}$ is:
\begin{linenomath}
\begin{align}\label{eq:matT}
\blkMat{T} = \blkMat{A}_H \blkMat{M}^{-1} &= 
\begin{bmatrix}
    I_u              &       0                          &                 0                             \\
  A_{pu} A_{uu}^{-1} &   B_p \tilde{B}_p^{-1}           & - \Delta t E_p A_{p \pi } \tilde{C}^{-1}_{\pi} \\
    0                &   A_{\pi p} \tilde{B}^{-1}_p &                 I_{\pi}
\end{bmatrix}
= \blkMat{I} +
\begin{bmatrix}
    0                &   0                             &   0                                           \\
  A_{pu} A_{uu}^{-1} &  E_p                            & - \Delta t E_p A_{p \pi } \tilde{C}^{-1}_{\pi} \\
    0                &  A_{\pi p} \tilde{B}^{-1}_p &   0
\end{bmatrix}.
\end{align}
\end{linenomath}
From equation~\eqref{eq:matT} it follows that the eigenvalues of $\blkMat{T}$ are 1 with multiplicity $n_u$ and the other $n_\pi + n_p$ are equal to those of $\blkMat{I} + \blkMat{Z}$.
However, $\blkMat{Z}$ has at most rank $2n_p$, since $n_\pi>n_p$ and $\ker(A_{p \pi })$ has at least dimension $n_{\pi} - n_p$.
\end{proof}

\begin{lem}\label{lm:bound}
The non-zero eigenvalues of $\blkMat{Z}$ in~\eqref{eq:Z_def} satisfy the upper bound:
\begin{linenomath}
\begin{equation}
    \left|\mu_i\right| \leq \varepsilon + \sqrt{\varepsilon^2 + 2 \Delta t \gamma \varepsilon},
    \label{eq:eigen_bound}
\end{equation}
\end{linenomath}
where $\varepsilon=\|E_p\|/2$ and $\gamma=\|A_{p\pi}\tilde{C}_\pi^{-1}A_{\pi p}\tilde{B}_p^{-1}\|$ for any compatible matrix norm.
\end{lem}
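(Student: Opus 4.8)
The plan is to work directly with the eigenvalue equation $\blkMat{Z}\binom{\Vec{x}}{\Vec{y}}=\mu\binom{\Vec{x}}{\Vec{y}}$ and, by eliminating the $\pi$-component $\Vec{y}$ of the eigenvector, reduce it to a single matrix identity involving $E_p$, from which a scalar quadratic inequality for $|\mu|$ follows.

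First I would fix a vector norm on $\mathbb{C}^{n_p+n_\pi}$ together with a compatible matrix norm (the one implicitly used to define $\varepsilon$ and $\gamma$), and pick a nonzero eigenvector $(\Vec{x},\Vec{y})$ of $\blkMat{Z}$ associated with a nonzero eigenvalue $\mu$, partitioned conformally with the blocks in \eqref{eq:Z_def} so that $\Vec{x}\in\mathbb{C}^{n_p}$ and $\Vec{y}\in\mathbb{C}^{n_\pi}$. The two block rows read $E_p\Vec{x}-\Delta t\,E_p A_{p\pi}\tilde{C}_\pi^{-1}\Vec{y}=\mu\Vec{x}$ and $A_{\pi p}\tilde{B}_p^{-1}\Vec{x}=\mu\Vec{y}$. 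If $\Vec{x}=\Vec{0}$, the second relation forces $\Vec{y}=\Vec{0}$ because $\mu\neq 0$, contradicting $(\Vec{x},\Vec{y})\neq\Vec{0}$; hence $\Vec{x}\neq\Vec{0}$ and I may normalize $\|\Vec{x}\|=1$.

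Next I would eliminate $\Vec{y}$: from the second equation $\Vec{y}=\mu^{-1}A_{\pi p}\tilde{B}_p^{-1}\Vec{x}$, and substituting into the first one and multiplying through by $\mu$ gives $\mu^2\Vec{x}=E_p\bigl(\mu\Vec{x}-\Delta t\,A_{p\pi}\tilde{C}_\pi^{-1}A_{\pi p}\tilde{B}_p^{-1}\Vec{x}\bigr)$. Taking norms and using, in order, compatibility of the matrix norm, the triangle inequality, $\|\Vec{x}\|=1$, and the definitions $\varepsilon=\|E_p\|/2$ and $\gamma=\|A_{p\pi}\tilde{C}_\pi^{-1}A_{\pi p}\tilde{B}_p^{-1}\|$, one obtains $|\mu|^2\leq\|E_p\|\bigl(|\mu|+\Delta t\,\gamma\bigr)=2\varepsilon|\mu|+2\Delta t\gamma\varepsilon$.

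Finally, this is the quadratic inequality $|\mu|^2-2\varepsilon|\mu|-2\Delta t\gamma\varepsilon\leq 0$ in the nonnegative unknown $|\mu|$; the associated upward-opening parabola has roots $\varepsilon\pm\sqrt{\varepsilon^2+2\Delta t\gamma\varepsilon}$, the smaller of which is nonpositive, so $|\mu|\leq\varepsilon+\sqrt{\varepsilon^2+2\Delta t\gamma\varepsilon}$, which is \eqref{eq:eigen_bound}. I do not expect a serious obstacle; the only points needing a little care are the bookkeeping that guarantees $\Vec{x}\neq\Vec{0}$ (so that eliminating $\Vec{y}$ is legitimate), the tacit use of the invertibility of $\tilde{B}_p$ and $\tilde{C}_\pi$ (the latter addressed separately), and ensuring the estimate is carried out with a matrix norm compatible with the vector norm used for the eigenvector, so that the step $\|E_p\Vec{v}\|\leq\|E_p\|\,\|\Vec{v}\|$ is valid.
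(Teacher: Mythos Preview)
Your argument is correct and follows essentially the same route as the paper: write the block eigenvalue equations, eliminate the $\pi$-component using $\mu\neq 0$, take norms to obtain the quadratic inequality $|\mu|^2-2\varepsilon|\mu|-2\Delta t\gamma\varepsilon\le 0$, and read off the bound. Your version is in fact a bit more careful, explicitly justifying $\Vec{x}\neq\Vec{0}$ (which the paper simply asserts) and flagging the compatibility requirement on the norm.
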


\begin{proof}
Let $\mu_i$ be a non-zero eigenvalue of $\blkMat{Z}$, with $\vec{v}=\begin{bmatrix} \vec{v}_p \\ \vec{v}_\pi \end{bmatrix}$ the corresponding eigenvector:
\begin{linenomath}
\begin{equation}
    \left\{ \begin{array}{l}
    E_p \vec{v}_p - \Delta t E_p A_{p\pi} \tilde{C}_\pi^{-1} \vec{v}_\pi = \mu_i \vec{v}_p \\
    A_{\pi p} \tilde{B}_p^{-1} \vec{v}_p = \mu_i \vec{v}_\pi
    \end{array} \right..\label{eq:eigen_exp}
\end{equation}
\end{linenomath}
Eliminating $\vec{v}_\pi$ from equation~\eqref{eq:eigen_exp} and taking compatible norms we obtain:
\begin{linenomath}
\begin{equation}
    \left|\mu_i\right| \left\| \vec{v}_p \right\| \leq 2 \varepsilon \left\| \vec{v}_p \right\| + 2 \frac{\Delta t}{\left|\mu_i\right|} \gamma \varepsilon \left\| \vec{v}_p \right\|.
    \label{eq:eigen_norm}
\end{equation}
\end{linenomath}
Since $\vec{v}_p\neq\vec{0}$,~\eqref{eq:eigen_norm} is equivalent to the inequality:
\begin{linenomath}
\begin{equation}
    \left| \mu_i \right|^2 - 2 \varepsilon \left| \mu_i \right| - 2 \Delta t \gamma \varepsilon \leq 0,
    \label{eq:eigen_ineq}
\end{equation}
\end{linenomath}
which yields the bound~\eqref{eq:eigen_bound}.
\end{proof}

\begin{rem}
Theorem~\ref{th:eigen} and the related Lemma~\ref{lm:bound} show that the quality of the approximation $\tilde{B}_{p}$ controls at most $2n_p$ eigenvalues of $\blkMat{T}$. 
If the quality of $\tilde{B}_{p}$ improves, so does the clustering of the eigenspectrum of $\blkMat{T}$.
In the limit of $\|E_p\|=0$, all the eigenvalues of $\blkMat{T}$ are unitary independently of $\Delta t$, which plays a secondary role.
\end{rem}

\begin{rem}
We recall that a clustered eigenspectrum far from 0 is not a sufficient condition to ensure
a fast GMRES or Bi-CGStab convergence,
because the solver behavior depends also on the eigenvectors of the preconditioned matrix.
It is well-known that we can build matrices with all unitary eigenvalues whose convergence can be achieved by GMRES only after a number of iterations on the order of the system size~\cite{GrePtaStr96}.
However, a wide computational experience with matrices arising from discretized PDEs shows that a preconditioned non-symmetric matrix with a compact eigenspectrum far from 0 very rarely yields
poor convergence. Hence, even though Theorem~\ref{th:eigen} and Lemma~\ref{lm:bound} do not provide a rigorous
convergence result for solvers like GMRES or Bi-CGStab, this outcome suggests that the proposed preconditioner is expected to be rather effective.
\end{rem}

\begin{thm}\label{th:C_spd}
Let $C_{\pi}$ be the second level Schur complement %obtained from the block preconditioner~\eqref{eq:hybridPrec} of the stabilized problem~\eqref{eq:hybridSys}:
of $\blkMat{A}_{H}$ in~\eqref{eq:stab_A}:
\begin{linenomath}
\begin{align}
%  C_{\pi} &= A_{\pi q} A_{qq}^{-1} A_{q \pi}
%               - \Delta t A_{\pi q} A_{qq}^{-1} A_{q p} {( A_{\text{stab}} + A_{pp} + \Delta t A_{pq} A_{qq}^{-1} A_{qp} 
%               + A_{pu} A_{uu}^{-1} A_{up})}^{-1} A_{pq}  A_{qq}^{-1} A_{q \pi} \\
C_\pi = A_{\pi \pi} - \Delta t A_{\pi p} B_p^{-1} A_{p\pi}.
\label{eq:Cpi}
\end{align}
\end{linenomath}
Then, $C_{\pi}$ is symmetric positive definite.
\end{thm}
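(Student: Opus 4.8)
The plan is to prove the two assertions (symmetry and positive definiteness) separately, and to reduce the second one to the positive definiteness of a \emph{symmetric} Schur complement, so that the $\Delta t$-induced non-symmetry of $\blkMat{A}_H$ is sidestepped. First I would record the identities produced by the static condensation in~\eqref{eq:hmfeSys}: using $A_{pw}=-A_{wp}^{T}$, $A_{\pi w}=-A_{w\pi}^{T}$ and $A_{ww}=A_{ww}^{T}$ one obtains $A_{\pi\pi}=A_{w\pi}^{T}A_{ww}^{-1}A_{w\pi}$, $A_{p\pi}=A_{wp}^{T}A_{ww}^{-1}A_{w\pi}$, hence $A_{\pi p}=A_{p\pi}^{T}$, and $\bar A_{pp}=A_{pp}+\Delta t\,A_{wp}^{T}A_{ww}^{-1}A_{wp}$; together with $A_{pu}=-A_{up}^{T}$ this gives $B_p=A_{pp}+A_{stab}+A_{up}^{T}A_{uu}^{-1}A_{up}+\Delta t\,A_{wp}^{T}A_{ww}^{-1}A_{wp}$, a symmetric matrix. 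Symmetry of $C_\pi$ then follows at once: $C_\pi^{T}=A_{\pi\pi}-\Delta t\,A_{p\pi}^{T}B_p^{-T}A_{\pi p}^{T}=A_{\pi\pi}-\Delta t\,A_{\pi p}B_p^{-1}A_{p\pi}=C_\pi$.

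For positive definiteness I would observe that $C_\pi$ is exactly the Schur complement, with respect to the $(1,1)$ block, of the \emph{symmetric} matrix
\[
\blkMat{S}=\begin{bmatrix}\Delta t^{-1}B_p & A_{p\pi}\\ A_{\pi p} & A_{\pi\pi}\end{bmatrix},
\]
obtained from the $(\Vec p,\Vec\pi)$ part of $\blkMat{A}_H$ by eliminating $\Vec u$ and then dividing the pressure block-row by $\Delta t$ (a left scaling of the block to be condensed, which leaves the Schur complement unchanged). Since the Schur complement of an SPD matrix with respect to a principal block is SPD, it suffices to prove $\blkMat{S}\succ 0$, and by the complementary Schur criterion this reduces to $A_{\pi\pi}\succ 0$ together with $\Delta t^{-1}B_p-A_{p\pi}A_{\pi\pi}^{-1}A_{\pi p}\succ 0$. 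The first is immediate: $A_{\pi\pi}=A_{w\pi}^{T}A_{ww}^{-1}A_{w\pi}$ with $A_{ww}$ SPD and $A_{w\pi}$ of full column rank, because each interface or flux face carries exactly one local $\mathbb{RT}_0$ degree of freedom with nonzero normal flux, so $\ker A_{w\pi}=\{\Vec 0\}$; the same argument applied to $A_{wp}$ shows $A_{wp}^{T}A_{ww}^{-1}A_{wp}$, hence $\bar A_{pp}$ and $\Delta t^{-1}B_p$, to be SPD.

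For the remaining inequality, set $X=A_{ww}^{-1/2}A_{wp}$ and $Y=A_{ww}^{-1/2}A_{w\pi}$. Then $A_{p\pi}A_{\pi\pi}^{-1}A_{\pi p}=X^{T}Y(Y^{T}Y)^{-1}Y^{T}X=X^{T}\Pi X$, where $\Pi$ is the orthogonal projector onto $\mathrm{range}(Y)$, so $A_{p\pi}A_{\pi\pi}^{-1}A_{\pi p}\preceq X^{T}X=A_{wp}^{T}A_{ww}^{-1}A_{wp}$. Since $\Delta t^{-1}B_p=\Delta t^{-1}\bigl(A_{pp}+A_{stab}+A_{up}^{T}A_{uu}^{-1}A_{up}\bigr)+A_{wp}^{T}A_{ww}^{-1}A_{wp}$, the $\Delta t$-dependent flux term cancels and $\Delta t^{-1}B_p-A_{p\pi}A_{\pi\pi}^{-1}A_{\pi p}\succeq\Delta t^{-1}\bigl(A_{pp}+A_{stab}+A_{up}^{T}A_{uu}^{-1}A_{up}\bigr)$; this last matrix is SPD because the macro-element stabilization of Section~3 restores the discrete inf--sup/solvability condition~\eqref{eq:solv}, i.e.\ the stabilized undrained saddle-point operator is nonsingular, which is precisely the statement that $A_{stab}+A_{up}^{T}A_{uu}^{-1}A_{up}$ (a fortiori with the PSD term $A_{pp}$ added) is SPD. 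Hence $\blkMat{S}\succ 0$, and therefore $C_\pi\succ 0$.

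I expect the main obstacle to be the bookkeeping: turning $\blkMat{A}_H$ into the symmetric $\blkMat{S}$ and exhibiting the $A_{ww}^{-1}$ structure of $A_{\pi\pi}$, $A_{p\pi}$ and $\bar A_{pp}$ from the condensation formulas. The single genuinely substantive step is the projector inequality $A_{p\pi}A_{\pi\pi}^{-1}A_{\pi p}=X^{T}\Pi X\preceq X^{T}X$, which is what makes the $\Delta t$-dependent terms telescope; once that is in place the claim rests entirely on the stabilized inf--sup condition already established, and the rank statements for $A_{w\pi}$ and $A_{wp}$ are elementary but must be spelled out.
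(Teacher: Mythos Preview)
Your argument is correct and follows a genuinely different route from the paper's. The paper proceeds by a direct quadratic-form manipulation: for a nonzero $\Vec{\pi}$ it expands $\Vec{\pi}^{T}C_{\pi}\Vec{\pi}$ via the condensation formulas, introduces the auxiliary vector $\vec{p}=\Delta t\,(H-\Delta t\,A_{pw}A_{ww}^{-1}A_{wp})^{-1}A_{pw}A_{ww}^{-1}A_{w\pi}\Vec{\pi}$, derives from its defining relation the identity $\Delta t^{-1}\vec{p}^{T}H\vec{p}+(A_{wp}\vec{p})^{T}A_{ww}^{-1}(A_{wp}\vec{p}+A_{w\pi}\Vec{\pi})=0$, adds it, and regroups to obtain $\Vec{\pi}^{T}C_{\pi}\Vec{\pi}=\Delta t^{-1}\vec{p}^{T}H\vec{p}+(A_{wp}\vec{p}+A_{w\pi}\Vec{\pi})^{T}A_{ww}^{-1}(A_{wp}\vec{p}+A_{w\pi}\Vec{\pi})$, a sum of two SPD quadratic forms. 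You instead recast $C_{\pi}$ as a Schur complement of the symmetrized $2\times 2$ block $\blkMat{S}$ and apply the complementary Schur criterion together with the Loewner-order projector inequality $X^{T}\Pi X\preceq X^{T}X$, which absorbs the flux term and reduces everything to $H\succ 0$. Both proofs rest on the same two ingredients---$H=A_{pp}+A_{stab}+A_{up}^{T}A_{uu}^{-1}A_{up}$ being SPD (the stabilized solvability condition) and $A_{w\pi}$ having full column rank---but package the linear algebra differently: the paper's auxiliary-vector trick yields an explicit sum-of-squares decomposition in the classical mixed-hybrid style, while your projector argument is more structural, makes transparent where the $\Delta t$-dependent contribution is controlled, and generalizes immediately to any SPD replacement of $H$ (exactly what the paper needs next for $\tilde{C}_{\pi}$).
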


\begin{proof}
The symmetry of $C_\pi$ follows immediately by construction.
The proof of the positive definiteness can be carried out according to the procedure sketched in~\cite{ChaJaf86,HuiKaa92}.
Using the block definitions previously introduced, $C_\pi$ reads:
\begin{linenomath}
\begin{equation}
    C_{\pi} = -A_{\pi w} A_{ww}^{-1} A_{w \pi} - \Delta t A_{\pi w} A_{ww}^{-1} A_{w p} {\left( A_{pp} + A_{stab} - \Delta t A_{pw} A_{ww}^{-1} A_{wp} - A_{pu} A_{uu}^{-1} A_{up} \right)}^{-1} A_{pw}  A_{ww}^{-1} A_{w \pi}.
    \label{eq:Cpi_full}
\end{equation}
\end{linenomath}
Let $\Vec{\pi}$ be a non-null vector in $\mathbb{R}^{n_{\pi}}$. Recalling that $A_{\pi w}=-A_{w\pi}^T$, we have:
\begin{linenomath}
\begin{align}
  \Vec{\pi}^T C_{\pi} \Vec{\pi}
  &= {(A_{w \pi }\Vec{\pi})}^T A_{ww}^{-1} (A_{w \pi} \Vec{\pi})
   + \Delta t {(A_{w \pi } \Vec{\pi})}^T A_{ww}^{-1} A_{wp} 
   {( H - \Delta t A_{pw} A_{ww}^{-1} A_{wp})}^{-1} A_{pw} A_{ww}^{-1} (A_{w \pi} \Vec{\pi}),
  \label{eq:S2_SPD_1}
\end{align}
\end{linenomath}
with $H = A_{pp} + A_{stab} - A_{pu} A_{uu}^{-1} A_{up}$.
Defining $\vec{p} \in \mathbb{R}^{n_p}$ as:
\begin{linenomath}
\begin{align}
  \vec{p} = \Delta t {( H - \Delta t  A_{pw} A_{ww}^{-1} A_{wp})}^{-1} A_{pw} A_{ww}^{-1} (A_{w\pi} \Vec{\pi}),
  \label{eq:S2_SPD_2}
\end{align}  
\end{linenomath}
equation~\eqref{eq:S2_SPD_1} becomes:
\begin{linenomath}
\begin{align}
  \Vec{\pi}^T C_{\pi} \Vec{\pi}
  &= {(A_{w \pi}\Vec{\pi})}^T A_{ww}^{-1} (A_{w \pi} \Vec{\pi})
   + {(A_{w \pi} \Vec{\pi})}^T A_{ww}^{-1} (A_{wp} \Vec{p}).
  \label{eq:S2_SPD_3}
\end{align}
\end{linenomath}
From the definition~\eqref{eq:S2_SPD_2}, it follows:
\begin{linenomath}
\begin{align}
  \frac{1}{\Delta t} \Vec{p}^T H \Vec{p} + {(A_{wp} \Vec{p})}^T A_{ww}^{-1} (A_{wp} \Vec{p} + A_{w\pi} \Vec{\pi}) = 0,
  \label{eq:S2_SPD4}
\end{align}
\end{linenomath}
which can be added to the right-hand side of~\eqref{eq:S2_SPD_3}:
\begin{linenomath}
\begin{align}
  \Vec{\pi}^T C_{\pi} \Vec{\pi}
  &= {(A_{w\pi}\Vec{\pi})}^T A_{ww}^{-1} (A_{w\pi} \Vec{\pi})
   + {(A_{w\pi} \Vec{\pi})}^T A_{ww}^{-1} (A_{wp} \Vec{p})
   + \frac{1}{\Delta t} \Vec{p}^T H \Vec{p}
   + {(A_{wp} \Vec{p})}^T A_{ww}^{-1} (A_{wp}\Vec{p} + A_{w \pi }\Vec{\pi}) \nonumber \\
  &= \frac{1}{\Delta t} \Vec{p}^T H \Vec{p}
   + {(A_{wp} \Vec{p} + A_{w\pi} \Vec{\pi})}^T A_{ww}^{-1} (A_{wp}\Vec{p} + A_{w\pi}\Vec{\pi}).
  \label{eq:S2_SPD_5}
\end{align}
\end{linenomath}
Recall that $H$ is SPD because $A_{pu}=-A_{up}^T$. Hence, $C_\pi$ is also positive definite because
$\Vec{\pi}^T C_{\pi} \Vec{\pi}$ is the sum of quadratic forms of SPD matrices.
\end{proof}

In the preconditioner~\eqref{eq:hybridPrec}, we use an approximation $\tilde{C}_\pi$ of $C_\pi$, which is obtained by replacing the Schur complement $B_p$ with $\tilde{B}_p$.
In this case, we define $\tilde{B}_p$ as:
\begin{linenomath}
\begin{equation}
    \tilde{B}_p = \tilde{H} - \Delta t A_{pw} A_{ww}^{-1} A_{wp},
    \label{eq:app_Bp}
\end{equation}
\end{linenomath}
where $\tilde{H}$ is a diagonal matrix with positive entries replacing the matrix $H$ introduced in the proof of Theorem~\ref{th:C_spd}.
Because $A_{pw}A_{ww}^{-1}A_{wp}$ is diagonal as well, $\tilde{B}_p$ can be inverted straightforwardly.

\begin{rem}
The proof of Theorem~\ref{th:C_spd} requires only $B_p=H-\Delta t A_{pw}A_{ww}^{-1}A_{wp}$ for a positive definite matrix $H$. Hence, replacing $H$ with $\tilde{H}$ as defined above and $B_p$ with $\tilde{B}_p$ of equation~\eqref{eq:app_Bp} allows to conclude that also $\tilde{C}_\pi$ is guaranteed to be SPD\@.
\end{rem}

\begin{lem}\label{thm:lam_C}
Any eigenvalue $\lambda$ of $\tilde{C}_{\pi}$ with $\tilde{B}_{p}$ as defined in~\eqref{eq:app_Bp} reads:
\begin{linenomath}
\begin{equation}
  \lambda = a - \sum^{n_p}_{j=1} \frac{b^2_j \Delta t}{c_{j} + d_{j} \Delta t }
\end{equation}
\end{linenomath}
where $a$, $\vec{b}={[b_1,\ldots,b_{n_p}]}^T$, $\vec{c}={[c_1,\ldots,c_{n_p}]}^T$ and $\vec{d}={[d_1,\ldots,d_{n_p}]}^T$ are:
\begin{linenomath}
\begin{align}
 a&= \vec{v}^{T} A_{\pi \pi} \vec{v}, \nonumber &
 \vec{b}&= A_{p \pi} \vec{v}, \nonumber \\
 %c_j&= {\left [ {\tilde{H}} \right]}_{j,j}, \nonumber &
 \vec{c}&= \operatorname{diag}({\tilde{H}} ), \nonumber &
 %d_j&= {\left [ {A_{qp}^T A^{-1}_{qq} A_{qp}} \right]}_{j,j}. \nonumber 
 \vec{d}&= \operatorname{diag}(A_{wp}^T A^{-1}_{ww} A_{wp}), \nonumber 
\end{align}
\end{linenomath}
with $\vec{v}$ the eigenvector associated to $\lambda$ such that $\|\vec{v}\|_2=1$.
\end{lem}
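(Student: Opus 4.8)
The plan is to read $\lambda$ off as the Rayleigh quotient of $\tilde{C}_\pi$ at a unit eigenvector and then exploit the \emph{diagonal} structure of $\tilde{B}_p$. First I would record the two structural facts that collapse everything to a scalar sum. Since $A_{pw}=-A_{wp}^T$, we have $-A_{pw}A_{ww}^{-1}A_{wp}=A_{wp}^T A_{ww}^{-1}A_{wp}$, which is the (diagonal, by the remark following~\eqref{eq:app_Bp}) matrix whose diagonal is $\vec{d}$; together with the diagonal matrix $\tilde{H}$ of diagonal $\vec{c}$, this shows $\tilde{B}_p=\tilde{H}-\Delta t\,A_{pw}A_{ww}^{-1}A_{wp}$ is diagonal with $j$-th entry $c_j+\Delta t\,d_j>0$, hence $\tilde{B}_p^{-1}=\operatorname{diag}\!\big(1/(c_j+\Delta t\,d_j)\big)$. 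Second, from the static-condensation expressions in~\eqref{eq:hmfeSys} and~\eqref{eq:A_33H} together with $A_{pw}=-A_{wp}^T$, $A_{\pi w}=-A_{w\pi}^T$ and $A_{ww}=A_{ww}^T$, one identifies $A_{p\pi}=A_{wp}^T A_{ww}^{-1}A_{w\pi}$ and $A_{\pi p}=A_{w\pi}^T A_{ww}^{-1}A_{wp}$, so that $A_{\pi p}=A_{p\pi}^T$.

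Next I would take an eigenpair $(\lambda,\vec{v})$ of $\tilde{C}_\pi$ with $\|\vec{v}\|_2=1$; since $\tilde{C}_\pi$ is SPD (remark after Theorem~\ref{th:C_spd}), $\vec{v}$ may be taken real and $\lambda=\vec{v}^T\tilde{C}_\pi\vec{v}$. Substituting $\tilde{C}_\pi=A_{\pi\pi}-\Delta t\,A_{\pi p}\tilde{B}_p^{-1}A_{p\pi}$ and using $A_{\pi p}=A_{p\pi}^T$ gives
\begin{equation}
  \lambda=\vec{v}^T A_{\pi\pi}\vec{v}-\Delta t\,(A_{p\pi}\vec{v})^T\tilde{B}_p^{-1}(A_{p\pi}\vec{v}).
\end{equation}
Setting $a=\vec{v}^T A_{\pi\pi}\vec{v}$ and $\vec{b}=A_{p\pi}\vec{v}\in\mathbb{R}^{n_p}$, and expanding the quadratic form in the diagonal matrix $\tilde{B}_p^{-1}$ componentwise, yields $\lambda=a-\Delta t\sum_{j=1}^{n_p} b_j^2/(c_j+\Delta t\,d_j)$, which is exactly the stated formula once the factor $\Delta t$ is moved into the summand.

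I do not expect a genuine obstacle here: the lemma is a bookkeeping consequence of (i) the diagonality of $\tilde{B}_p$, (ii) the transpose relation $A_{\pi p}=A_{p\pi}^T$, and (iii) the elementary identity $\vec{v}^T\tilde{C}_\pi\vec{v}=\lambda$ for a unit eigenvector. The only point needing mild care is keeping the sign convention $A_{pw}=-A_{wp}^T$ straight, so that the diagonal entries $c_j+\Delta t\,d_j$ of $\tilde{B}_p$ come out positive — which is also what makes $\tilde{B}_p^{-1}$ well defined and, consistently with the remark preceding the lemma, $\tilde{C}_\pi$ SPD.
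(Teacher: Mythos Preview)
Your proposal is correct and follows essentially the same approach as the paper: both arguments compute $\lambda$ as the Rayleigh quotient $\vec{v}^T\tilde{C}_\pi\vec{v}$ at a unit eigenvector and then expand the second term componentwise using the diagonality of $\tilde{B}_p$. Your write-up is in fact slightly more explicit than the paper's, which simply states that the result ``follows immediately from writing the Rayleigh quotient of $\tilde{C}_\pi$'' and recalling that $\tilde{B}_p$ is diagonal, without spelling out the transpose relation $A_{\pi p}=A_{p\pi}^T$ or the sign bookkeeping.
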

\begin{proof}
The lemma follows immediately from writing the Rayleigh quotient of $\tilde{C}_\pi$.
Since $\tilde{C}_\pi$ is SPD, $\lambda$ reads: %Let us consider the Rayleigh quotient of the Schur complement $\tilde{C}_{\pi}$, then:
\begin{linenomath}
\begin{equation}
  \lambda = \vec{v}^{T} \tilde{C}_{\pi} \vec{v} = 
   \vec{v}^{T} A_{\pi \pi} \vec{v} - \Delta t  \vec{v}^{T} A_{\pi p} \tilde{B}^{-1}_{p} A_{p \pi} \vec{v}.
\end{equation}
\end{linenomath}
%Denoting $\vec{w}= A_{p \pi} \vec{v}$ and 
Recalling that $\tilde{B}_p$ is diagonal, %matrix given by Eq.~\eqref{eq:app_Bp}, 
%$\tilde{H} +\Delta t A_{qp}^T A_{qq}^{-1} A_{qp}$, 
we have:
\begin{linenomath}
\begin{align}
  \lambda &= a - \Delta t \vec{b}^T {(\tilde{H} + \Delta t A_{wp}^T A_{ww}^{-1} A_{wp} )}^{-1} \vec{b} \nonumber \\
          %&= a - \sum^{n_p}_{j=1}\Delta t {b}_j {\left ({[\tilde{H}]}_{j,j} 
          %+ \Delta t {[A_{qp}^T A_{qq}^{-1} A_{qp}]}_{j,j} \right )}^{-1} {b}_j \nonumber\\
          &= a - \sum^{n_p}_{j=1} \frac{b^2_j \Delta t}{ c_{j} + d_j\Delta t }.  
\end{align}
\end{linenomath}
%which is the relation we wanted to prove.
\end{proof}

\begin{rem}
Notice that $a$, $c_j$ and $d_j$ are strictly positive numbers for any $j=1,\ldots,n_p$. Hence,
Lemma~\ref{thm:lam_C} shows that
%From the result of Theorem~\ref{thm:lam_C}, we can further state that 
any eigenvalue of $\tilde{C}_\pi$ %$\lambda(\Delta t)$ 
is a positive function 
monotonically decreasing with $\Delta t\in\left (0,+\infty \right)$ and bounded between $a$ and $a-\vec{b}^T {(A_{wp}^T A^{-1}_{ww} A_{wp})}^{-1}\vec{b}$. %for 
%$\Delta t \rightarrow 0$ and $\Delta t \rightarrow \infty$, respectively.
Therefore, the condition number of $\tilde{C}_\pi$ is also bounded for any $\Delta t$.
\end{rem}

\section{Numerical results}
Three sets of numerical experiments are used to investigate both the accuracy of the stabilization technique and the computational efficiency of the preconditioner.
The first set (Test 1) arises from Barry-Mercer's problem~\cite{BerMer99}, i.e., a 2D benchmark of linear poroelasticity.
This problem is used to verify the theoretical error convergence and test the accuracy of the stabilization.
The second set is an impermeable cantilever beam (Test 2), which is used to study the stabilization effectiveness and to perform an analysis of the preconditioner weak scalability. 
Finally, we consider a field application (Test  3) to investigate the preconditioner robustness with respect to a strong variability of the governing material parameters, computational efficiency and strong scalability in a parallel context.

In all test cases, GMRES~\cite{SaaSch86} with right preconditioning is selected as the Krylov subspace method with zero initial guess.
Iterations are stopped when the 2-norm of the initial residual is reduced below a user-specified tolerance $\tau=10^{-6}$.
The computational performance is evaluated in terms of number of iterations $n_{it}$, 
CPU time in seconds for the preconditioner construction $T_p$ and for the Krylov solver to converge $T_s$.
The total time is denoted by $T_t = T_p + T_s$.
All computations are performed: 
(a) on an Intel Core i7 4770 processor at 3.4 GHz with 8-GB of memory for serial simulations; (b) on a high performance cluster with nodes containing two Intel Xeon E5-2695 18-core processors sharing 128 GiB of memory on each node with Intel Omni-Path interconnects
between nodes for parallel simulations.
These numerical experiments have been implemented using \textit{Geocentric}, a simulation framework for computational geomechanics \cite{WhiBor11} that relies heavily on finite element infrastructure from the deal.ii library~\cite{dealII91}. 
The PETSc suite~\cite{petsc-web-page} is used as a linear algebra package.

We consider the following variants of the proposed preconditioner:
\begin{linenomath}
\begin{align}
\blkMat{M}^{(h)}_{I}&=\blkMat{M}(A_{uu},\tilde{B}_{p},\tilde{C}_{\pi})\\ 
\blkMat{M}^{(h)}_{II}&=\blkMat{M}(A^{(\text{gamg\_s})}_{uu},\tilde{B}_{p},\tilde{C}^{(\text{B\_amg})}_{\pi}) \\
\blkMat{M}^{(h)}_{III}&=\blkMat{M}(A^{(\text{gamg\_r})}_{uu},\tilde{B}_{p},\tilde{C}^{(\text{B\_amg})}_{\pi}) \label{eq:varhprec}
\end{align}
\end{linenomath}
In $\blkMat{M}^{(h)}_{I}$ we implement the exact application of $A^{-1}_{uu}$, $\tilde{B}^{-1}_{p}$, $\tilde{C}^{-1}_{\pi}$ by a nested direct solver.
The only approximation here is the substitution of the exact Schur complement $B_p$ with the diagonal approximation $\tilde{B}_p=A_{pp}+\text{diag}(A_{stab})+\text{diag}(A_{up}^T \text{diag}(A_{uu})^{-1} A_{up})$. %\todoi{We don't say anywhere how the diagonal approximation of $\tilde{B}_p$ is computed. Somewhere we should provide the expression $-A_{pu} A_{uu}^{-1} A_{up} \approx - \text{diag}(-A_{pu} \text{diag}(A_{uu})^{-1} A_{up})$.}
In contrast, approaches based on $\blkMat{M}^{(h)}_{II}$ and $\blkMat{M}^{(h)}_{III}$  introduce further levels of approximation by utilizing algebraic multigrid preconditioners for each sub-problem.
The algebraic multigrid selected for $A^{-1}_{uu}$ is GAMG~\cite{petsc-user-ref} with smoothed aggregation, using either the Separate Displacement Component (SDC) approach (superscripts ``gamg\_s'' in $\blkMat{M}^{(h)}_{II}$), or the near kernel information provided by the Rigid Body Modes (RBM) (superscripts ``gamg\_r'' in $\blkMat{M}^{(h)}_{III}$).
In both $\blkMat{M}^{(h)}_{II}$ and $\blkMat{M}^{(h)}_{III}$, the classical AMG method \cite{RugStu87} as implemented in the Hypre package \cite{Falgout02} is used for $\tilde{C}_{\pi}$.

We compare this preconditioner with the Block Triangular preconditioner (BTP) originally developed in~\cite{CasWhiFer16} for the mixed system~\eqref{eq:mfeSys}:
\begin{linenomath}
\begin{align}
\blkMat{M}(M_{A_{uu}},M_{A_{qq}},M_{C_{p}}) = 
\begin{bmatrix}
M_{A_{uu}} & 0  & 0 \\  
0  & M_{A_{qq}} & 0 \\
\Mat{A}_{pu} & \Delta t \Mat{A}_{pq} & M_{C_{p}}
\end{bmatrix}, \label{eq:prec}
\end{align}
\end{linenomath}
with $\Mat{C}_p = A_{stab} + A_{pp} - A_{pu} A^{-1}_{uu} A_{up} - \Delta t A_{pq} A^{-1}_{qq} A_{q p}$, and $M_{A_{uu}}$, $M_{A_{q q}}$ and $M_{C_p}$ inner preconditioners for $A_{uu}$, $A_{q q}$ and $C_{p}$, respectively.
The Schur complement $C_p$ is replaced by $\tilde{C}_p$, where the contribution $A_{pu} A^{-1}_{uu} A_{up}$ is approximated by the diagonal fixed-stress matrix \cite{CasWhiTch15,WhiCasTch16} and $A_{p q} {A}^{-1}_{q q} A_{q p }$ by replacing $A_{q q }$ with a lumped spectrally equivalent matrix~\cite{BerManMan98}.
Note that, since $A_{pp}$ is diagonal, the sparsity pattern of $\tilde{C}_{p}$ is still that of $A_{p q} A_{q p}$ independently of the presence of $A_{stab}$ \cite{FerFriCasWhi20}.
Following the notation used in~\eqref{eq:varhprec}, we define two variants for BTP: 
\begin{linenomath}
\begin{align}
\blkMat{M}^{(m)}_{I}&=\blkMat{M}(A_{uu},{A}_{q q},\tilde{C}_{p})\\ 
\blkMat{M}^{(m)}_{II}&=\blkMat{M}(A^{(\text{gamg\_s})}_{uu},{A}^{(\text{ic})}_{q q},\tilde{C}^{(\text{B\_amg})}_{p})
\end{align}
\end{linenomath}
Note that the use of an incomplete Cholesky factorization with zero prescribed degree of fill-in for the matrix $A_{q q}$ in the $\blkMat{M}^{(m)}_{II}$ variant is sufficient to obtain optimal performances~\cite{CasWhiFer16}.

\subsection{Barry-Mercer's problem}

\begin{figure}
\centering
  \hfill
  \begin{subfigure}[b]{.45\linewidth}
  \centering
  \begin{tikzpicture}[>=latex,font=\small]
    \draw [thick] plot coordinates {(0,3.0) (3.0,3.0)};
    \draw [thick] plot coordinates {(3.0,3.0) (3.0,0.0)};
    \draw [thick] plot coordinates {(3.0,0.0) (0.0,0.0)};
    \draw [thick] plot coordinates {(0.0,0.0) (0.0,3.0)};
    % Axes
    \draw [->] (-1.7,0) -- (-1.7,1.);
    \draw [->] (-1.7,0) -- (-0.7,0);
   % Ticks
    \draw [thin] plot coordinates {(-0.03,3.0) (0.03,3.0)};
    \draw [thin] plot coordinates {(3.0,-0.03) (3.0,0.03)};
   % Label 
    \node [left] at (-1.7,1.) {$y$};
    \node [below] at (-0.7,0) {$x$};
    % Label ticks
    \node at (1.0,0.8) { %\resizebox{0.7\width}{0.7\height} 
    \small {$f$}};
    \node [left] at (0,3) { %\resizebox{0.7\width}{0.7\height}
    \small {$(0,l)$}};
    \node [below] at (3.,0) { %\resizebox{0.5\width}{0.5\height}
    \small{$(l,0)$}};
    \node [below] at (0.,0) { %\resizebox{0.5\width}{0.5\height}
    \small{$(0,0)$}};
    % Points
    \fill (0.75,0.75) circle (0.2em);
    \fill (3,0) circle (0.2em);
    \fill (0,3) circle (0.2em);
    \fill (0,0) circle (0.2em);
      %\draw plot [mark=*, smooth] coordinates
      %         {(0,0.5) (0.25,0.5) (3,1.7) (3,1) (4,0)};
      %\pattern [pattern=north west lines]
      %(0,0) -- (0,1) -- (1,1) -- (2,0);
    \node at (-0.5,1.5)  [rotate=90] { %\resizebox{0.5\width}{0.5\height}
    \small{$p=u_y=\frac{\displaystyle \partial u_x}{\displaystyle \partial x} =0$}};
    \node at (3.5,1.5)  [rotate=90] { %\resizebox{0.5\width}{0.5\height}
    \small{$p=u_y=\frac{\displaystyle \partial u_x}{\displaystyle \partial x} =0$}};
    \node at (1.5,-0.5) { %\resizebox{0.5\width}{0.5\height}
    \small{$p=u_x=\frac{\displaystyle \partial u_y}{\displaystyle \partial y} =0$}};
    \node at (1.5,3.5) { %\resizebox{0.5\width}{0.5\height}
    \small{$p=u_x=\frac{\displaystyle \partial u_y}{\displaystyle \partial y} =0$}};
    \end{tikzpicture}
    \caption{}
  \end{subfigure}
  \hfill
  \begin{subfigure}[b]{.45\linewidth}
    \small
    \centering
    \begingroup
    \renewcommand{\arraystretch}{1.4} % Default value: 1
    \begin{tabular}{lll}
      \toprule
      Quantity                          & Value & Unit \\
      \midrule   
      Young's modulus ($E$)             & $1 \times 10^5$    & [Pa] \\  
      Poisson's ratio ($\nu$)           & $0.1$              & [-] \\ 
      Biot's coefficient ($b$)          & $1.0$              & [-] \\ 
      \begin{tabular}[c]{@{}Sl@{}}
        Constrained specific \\[-7pt]
        storage ($S_{\epsilon}$)
      \end{tabular}                     & $0$                & [Pa] \\
      Isotropic permeability ($\kappa$) & $1 \times 10^{-9}$ & [m$^2$] \\
      Fluid viscosity ($\mu$)           & $1 \times 10^{-3}$ & [Pa $\cdot$ s] \\
      Domain size $x$-$y$ ($l$)         & $1.0$              & [m] \\
      \bottomrule
    \end{tabular}
    \endgroup
    \caption{}
  \end{subfigure}
  \hfill\null
\caption{Barry-Mercer's problem: (a) domain sketch and (b) physical parameters.}\label{fig:BM_dom}
\end{figure}

An analytical validation test for poroelasticity is Barry-Mercer's problem~\cite{BerMer99}, which describes flow and deformation due to a point-source sine wave on a square domain $[0,l] \times [0,l]$ (Fig.~\ref{fig:BM_dom}). 
The periodic point source term is located at $\Vec{x}_0$ and is given by:
\begin{linenomath}
\begin{equation}
f(t) = 2 \hat{\beta} \delta (\Vec{x} - \Vec{x}_0) \sin (\hat{\beta} t)
\end{equation} 
\end{linenomath}
with $\delta(\cdot)$ the Dirac function and $\hat{\beta}=(\lambda + 2 G) \kappa / \mu$. 
%Here we denote with $\lambda$ and $G$ the Lamé parameters. 
All sides are constrained with zero pressure and zero tangential displacement boundary conditions.
%The problem admits an analytical solution that make it
%particularly suitable for numerical validation of the new formulation.
The hydro-mechanical properties are provided in Fig.~\ref{fig:BM_dom}.

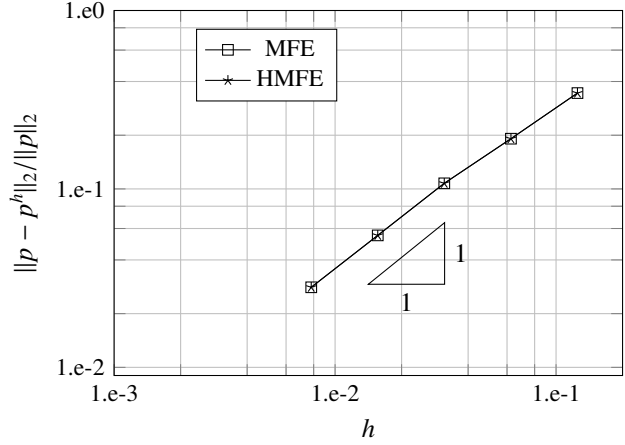
\begin{figure}
  \centering
  \begin{tikzpicture}
    \begin{loglogaxis}[ scale=0.45,width=\linewidth,height=.75\linewidth, grid=major,xmin=1.e-3,xmax=2.e-1,ymin=9.e-3,ymax=1.e-0,
      xlabel={ $h$ }, ylabel={ $\|p-p^h\|_2/\|p\|_2$  },%ytick={6.0,8.0,10.0,12.0,14.0,16.0,18.0},
      xtick={1.e-3,2.e-3,4.e-3,6.e-3,8.e-3,1.e-2,2.e-2,4.e-2,6.e-2,8.e-2,1.e-1},
      xticklabels={{\small 1.e-3},{\small },{\small },{\footnotesize },{\small },
                   {\small 1.e-2 },{\small },{\small },{\footnotesize },{\small },{\small 1.e-1}},
      ytick={1.e-2,2.e-2,4.e-2,6.e-2,8.e-2,1.e-1,2.e-1,4.e-1,6.e-1,8.e-1,1.e0},
      yticklabels={{\small1.e-2},{\small },{\small },{\small },{\small },
                  {\small 1.e-1},{\small },{\small },{\small },{\small },{\small 1.e0}},
      ylabel near ticks,xlabel near ticks,
      legend style={at={(0.3,0.95)}, anchor=north }
]
      \addplot [black,mark=square] table [x=h,y=err] {./error_BM.txt};
      \addplot [black,mark=star] table [x=h,y=err] {./error_BM.txt};
      \logLogSlopeTriangle{0.65}{0.15}{0.25}{1}{black};
      \legend{\small{MFE},\small{HMFE}}
    \end{loglogaxis}
  \end{tikzpicture}
  \caption{Test 1, Barry-Mercer's problem: Convergence of the relative $L_2$-error in pressure at $\hat{t}=\pi/2$.}\label{fig:CPMan20}
\end{figure}

Fig.~\ref{fig:CPMan20} shows convergence behavior of the relative $L_2$-error of the pressure solution for both the mixed and mixed hybrid stabilized formulation.
The outcome of the two formulations is the same, with a linear convergence rate, as expected.
Figs.~\ref{fig:BM13} and~\ref{fig:BM1632}  provide a comparison between the stabilized and non stabilized formulations. 
In particular, Fig.~\ref{fig:BM13} shows the contour of the pressure field while Fig.~\ref{fig:BM1632} provides the vertical profiles for two different grid refinement levels.
For each level a zoom is shown to highlight the spurious oscillatory behavior of the unstabilized formulation.
We consider here a very small time step to simulate the process in undrained conditions ($\Delta t=10^{-6}
\pi/(2\hat{\beta})$).
The results show the effectiveness of the stabilization for eliminating the spurious oscillations.

\begin{figure}
  \small
  \centering
  \hfill
  \begin{subfigure}[c]{.25\linewidth}
    \begin{tikzpicture}[scale=1.0]
      % Include legend image
      \node[anchor=south west,inner sep=0] (image) at (.4\linewidth,0) {\includegraphics[width=.125\linewidth,height=.60\linewidth]{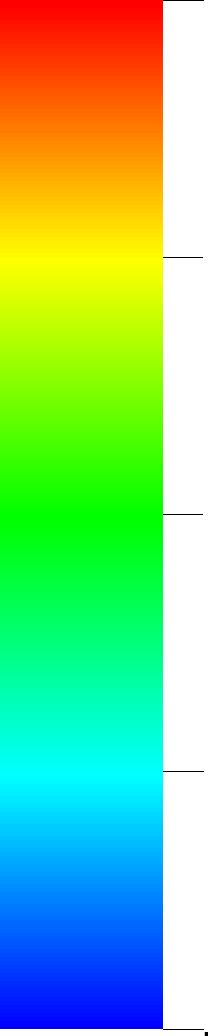}};
      % Draw legen box and tick
      \draw [line width=0.3mm,color=black,line cap=round]  (0.400\linewidth,0.000\linewidth) --
                                                           (0.500\linewidth,0.000\linewidth) --
                                                           (0.500\linewidth,0.600\linewidth) --
                                                           (0.400\linewidth,0.600\linewidth) --
                                                           (0.400\linewidth,0.000\linewidth);
      \foreach \x in {0.0, 0.15, ..., 0.6}
        \draw (0.500*\linewidth, \x*\linewidth) -- (0.525*\linewidth, \x*\linewidth);
      % Labels
      \node [left] at (\linewidth,0.60*\linewidth) {$2.000e-05$};
      \node [left] at (\linewidth,0.45*\linewidth) {$1.425e-05$};
      \node [left] at (\linewidth,0.30*\linewidth) {$8.500e-06$};
      \node [left] at (\linewidth,0.15*\linewidth) {$2.750e-06$};
      \node [left] at (\linewidth,0.00*\linewidth) {$-3.000e-06$};
      \node [below] at (.7\linewidth,-0.075*\linewidth) {Pressure [Pa]};
    \end{tikzpicture}    
  \end{subfigure}
  \begin{subfigure}[c]{.3\linewidth}
    \begin{tikzpicture}[scale=1.0]
      \node[anchor=south west,inner sep=0] (image) at (0,0) {\includegraphics[width=\linewidth]{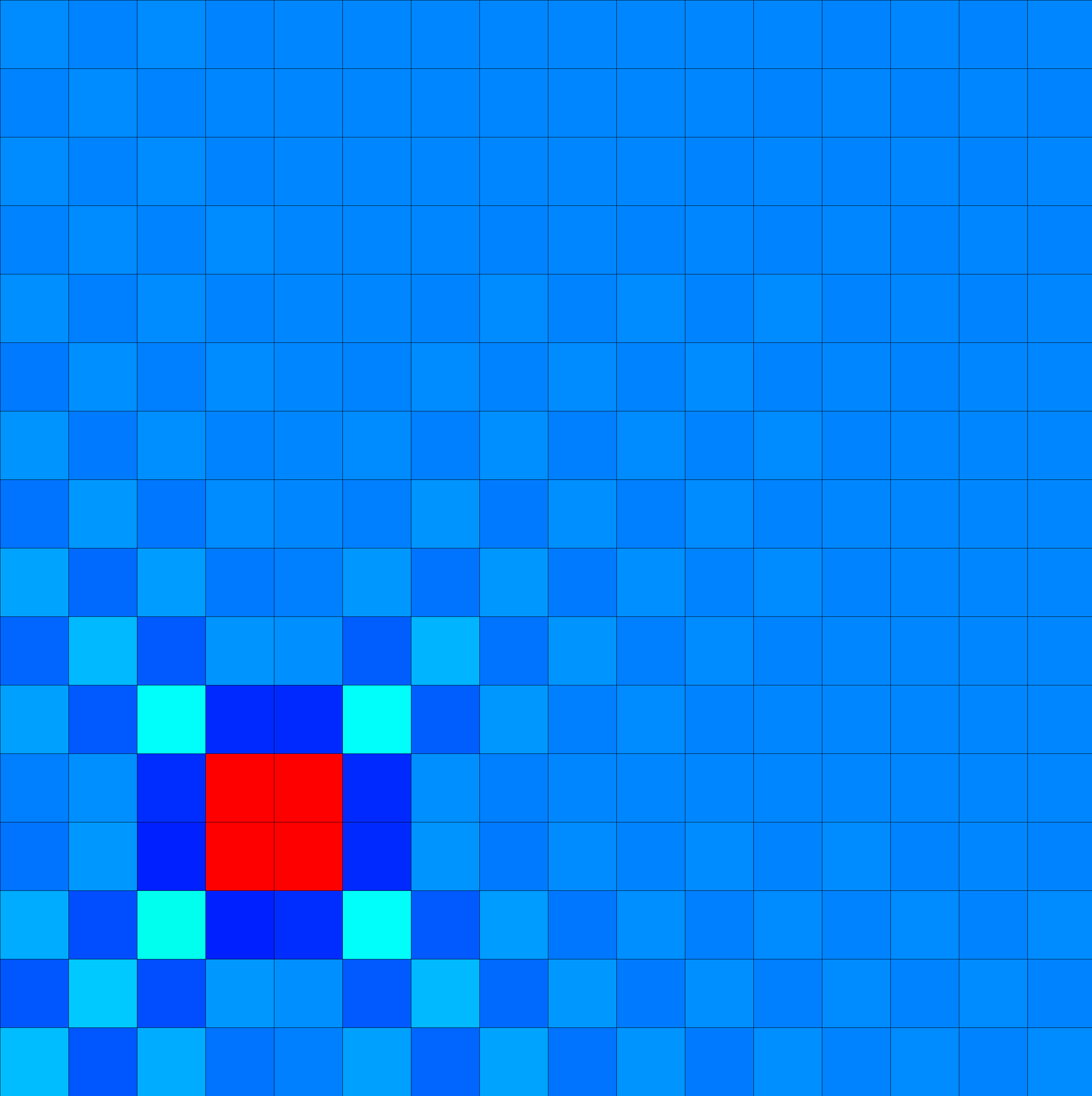}};
      \draw[ step=1*\linewidth/16,
             black,
             line width=0.3mm,
              line cap=round]
        (0.0,0.0) grid (\linewidth, \linewidth);
    \end{tikzpicture}
    \caption{}
  \end{subfigure}
  \hfill
  \begin{subfigure}[c]{.3\linewidth}
    \begin{tikzpicture}[scale=1.0]
      \node[anchor=south west,inner sep=0] (image) at (0,0) {\includegraphics[width=\linewidth]{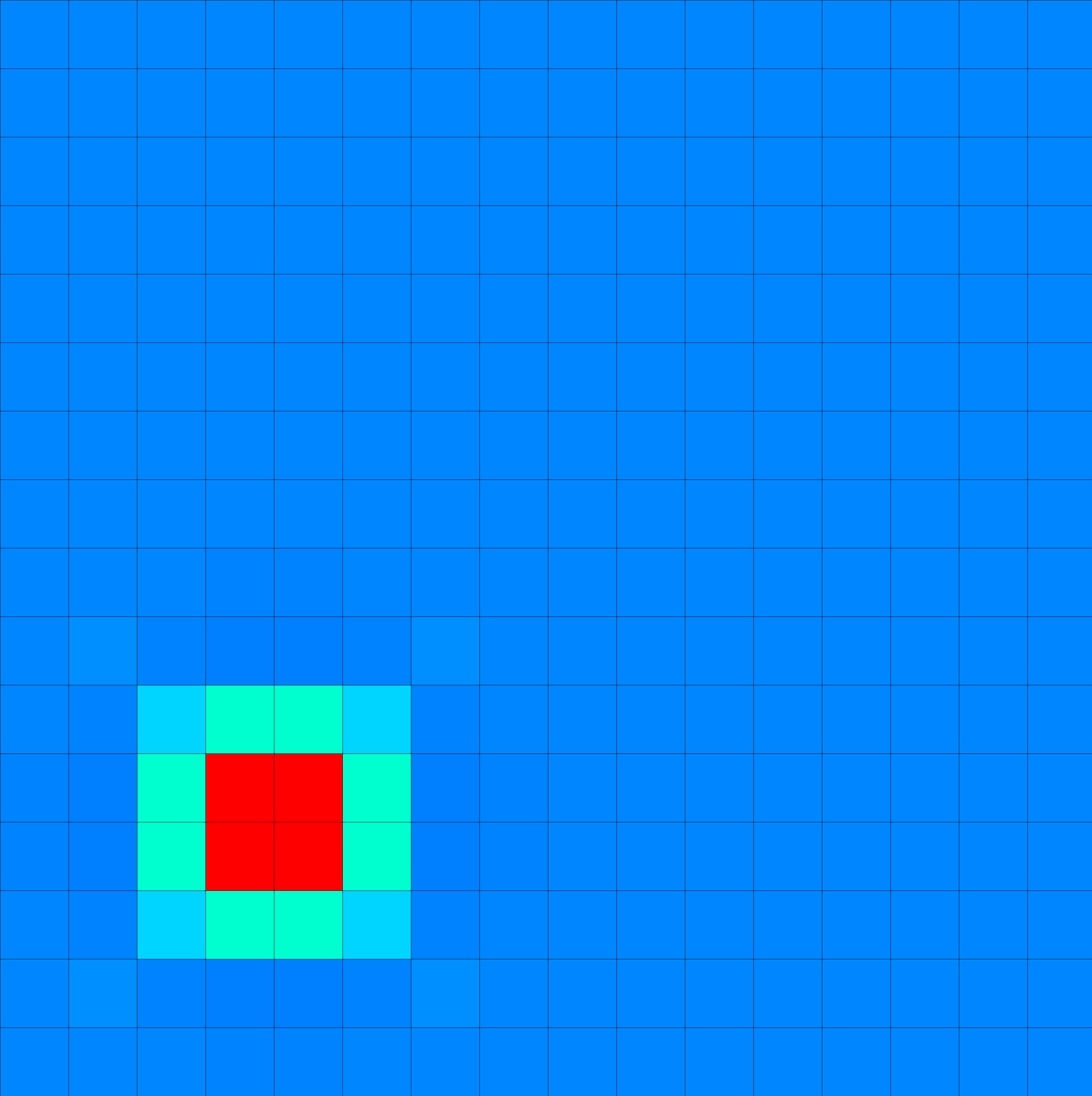}};
      \draw[ step=1*\linewidth/16,
             black,
             line width=0.3mm,
             line cap=round]
        (0.0,0.0) grid (\linewidth, \linewidth);
    \end{tikzpicture}
    \caption{}
  \end{subfigure}
  \hfill\null
\caption{{Test 1, Barry-Mercer's problem: Pressure contour for the unstabilized (a) and stabilized (b) formulations}}\label{fig:BM13}
\end{figure}

\begin{figure}
\centering
\hfill
\begin{subfigure}[c]{.45\linewidth}
\begin{tikzpicture}
\pgfplotsset{scaled y ticks=false}
\begin{axis}[ scale=1.0,width=\linewidth,height=.75\linewidth, grid=major,xmin=0,xmax=1.,ymin=-1.e-5,ymax=4.e-5,
	xlabel={\small{ $y$ [m]} },ylabel={ \small{Pressure [Pa] }},
  xtick={0,0.2,0.4,0.6,0.8,1.0},
  xticklabels={{\small 0},{ \small 0.2},{\small 0.4},{\small 0.6},{\small 0.8},{\small 1.0}},
  ytick={-1e-5,0,1.e-5,2.e-5,3.e-5,4.e-5,5.e-5,6.e-5},
  yticklabels={{\small -1.e-5},{\small 0.0},{\small 1.e-5},{\small 2.e-5 },{\small 3.e-5},
               {\small 4.e-5},{\small 5.e-5}},
  ylabel near ticks,xlabel near ticks]
\addplot [dashed,black,mark=*] table [x=x,y=y1] {./press_16.txt};
\addplot [dashdotted,black,mark=star] table [x=x,y=y2] {./press_16.txt};
\addplot [black,mark=square] table [x=x,y=y3] {./press_16.txt};
\legend{\small{$x=0.094$},\small{$x=0.156$},\small{$x=0.218$}}
\end{axis}
\end{tikzpicture} 
\caption{Unstabilized, $h$ = 1/16}
\end{subfigure}
\hfill
\begin{subfigure}[c]{.45\linewidth}
\begin{tikzpicture}
\pgfplotsset{scaled y ticks=false}
\begin{axis}[ scale=1.0,width=\linewidth,height=.75\linewidth, grid=major,xmin=0,xmax=1.,ymin=-1.e-5,ymax=4.e-5,
	xlabel={\small{ $y$ [m]} },ylabel={ \small{Pressure [Pa] }},
  xtick={0,0.2,0.4,0.6,0.8,1.0},
  xticklabels={{\small 0},{ \small 0.2},{\small 0.4},{\small 0.6},{\small 0.8},{\small 1.0}},
  ytick={-1e-5,0,1.e-5,2.e-5,3.e-5,4.e-5,5.e-5,6.e-5},
  yticklabels={{\small -1.e-5},{\small 0.0},{\small 1.e-5},{\small 2.e-5 },{\small 3.e-5},
               {\small 4.e-5},{\small 5.e-5}},
  ylabel near ticks,xlabel near ticks]
\addplot [dashed,black,mark=*] table [x=x,y=y1] {./press_32.txt};
\addplot [dashdotted,black,mark=star] table [x=x,y=y2] {./press_32.txt};
\addplot [black,mark=square] table [x=x,y=y3] {./press_32.txt};
\legend{\small{$x=0.109$},\small{$x=0.203$},\small{$x=0.234$}}
\end{axis}
\end{tikzpicture} 
\caption{Unstabilized, $h$ = 1/32}
\end{subfigure}
\hfill\null

\hfill
\begin{subfigure}[c]{.45\linewidth}
\begin{tikzpicture}
\pgfplotsset{scaled y ticks=false}
\begin{axis}[ scale=1.0,
              width=\linewidth, height=.75\linewidth,
              grid=major,
              xmin=0, xmax=1., ymin=-1.e-5, ymax=4.e-5,
              xlabel={\small{ $y$ [m]} },ylabel={ \small{Pressure [Pa] }},
              xtick={0,0.2,0.4,0.6,0.8,1.0},
              xticklabels={{\small 0},{ \small 0.2},{\small 0.4},{\small 0.6},{\small 0.8},{\small 1.0}},
              ytick={-1e-5,0,1.e-5,2.e-5,3.e-5,4.e-5,5.e-5,6.e-5},
              yticklabels={{\small -1.e-5},{\small 0.0},{\small 1.e-5},{\small 2.e-5 },{\small 3.e-5},
               {\small 4.e-5},{\small 5.e-5}},
              ylabel near ticks,xlabel near ticks]
\addplot [dashed,black,mark=*] table [x=x,y=y1] {./press_16_stab.txt};
\addplot [dashdotted,black,mark=star] table [x=x,y=y2] {./press_16_stab.txt};
\addplot [black,mark=square] table [x=x,y=y3] {./press_16_stab.txt};
\legend{\small{$x=0.094$},\small{$x=0.156$},\small{$x=0.218$}}
\end{axis}
\end{tikzpicture} 
\caption{Stabilized, $h$ = 1/16}
\end{subfigure}
\hfill
\begin{subfigure}[c]{.45\linewidth}
\begin{tikzpicture}
\pgfplotsset{scaled y ticks=false}
\begin{axis}[ scale=1.0,width=\linewidth,height=.75\linewidth, grid=major,xmin=0,xmax=1.,ymin=-1.e-5,ymax=4.e-5,
	xlabel={\small{ $y$ [m]} },ylabel={ \small{Pressure [Pa] }},
  xtick={0,0.2,0.4,0.6,0.8,1.0},
  xticklabels={{\small 0},{ \small 0.2},{\small 0.4},{\small 0.6},{\small 0.8},{\small 1.0}},
  ytick={-1e-5,0,1.e-5,2.e-5,3.e-5,4.e-5,5.e-5,6.e-5},
  yticklabels={{\small -1.e-5},{\small 0.0},{\small 1.e-5},{\small 2.e-5 },{\small 3.e-5},
               {\small 4.e-5},{\small 5.e-5}},
  ylabel near ticks,xlabel near ticks]
\addplot [dashed,black,mark=*] table [x=x,y=y1] {./press_32_stab.txt};
\addplot [dashdotted,black,mark=star] table [x=x,y=y2] {./press_32_stab.txt};
\addplot [black,mark=square] table [x=x,y=y3] {./press_32_stab.txt};
\legend{\small{$x=0.109$},\small{$x=0.203$},\small{$x=0.234$}}
\end{axis}
\end{tikzpicture} 
\caption{Stabilized, $h$ = 1/32}
\end{subfigure}
\hfill\null
\caption{{Test 1, Barry-Mercer's problem: Pressure along the $y$-axis for the unstabilized (top panels) and stabilized (bottom panels) formulations.}}\label{fig:BM1632}
\end{figure}
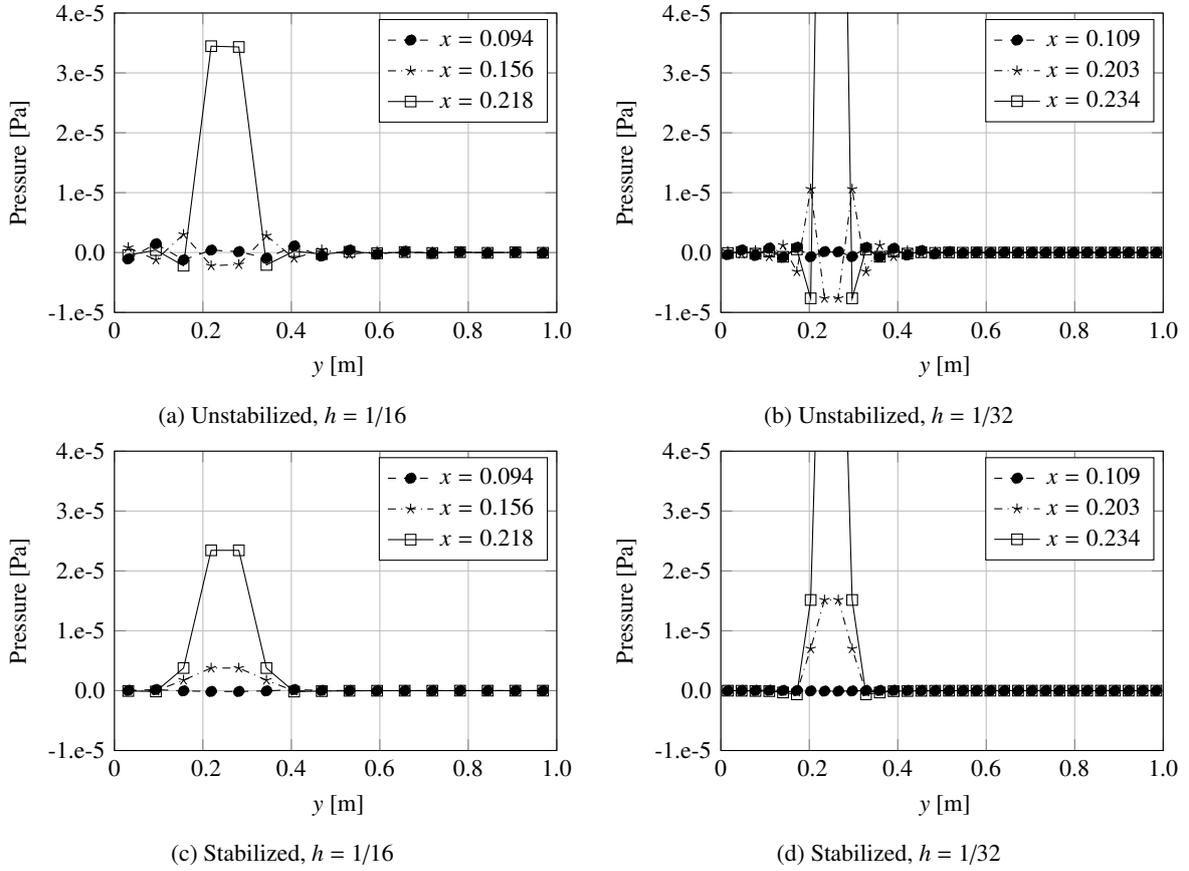

\subsection{Cantilever beam}

\begin{figure}
\centering
  \hfill
  \begin{subfigure}[b]{.45\linewidth}
  \centering
\begin{tikzpicture}[>=latex,font=\small]
    \draw [thick] plot coordinates {(0,3.0) (3.0,3.0)};
   %         (\x, {sqrt(\x)});
    \draw [thick] plot coordinates {(3.0,3.0) (3.0,0.0)};
    \draw [thick] plot coordinates {(3.0,0.0) (0.0,0.0)};
    \draw [thick] plot coordinates {(0.0,0.0) (0.0,3.0)};
    \draw [->] (-1.7,0) -- (-1.7,1.);
    \draw [->] (-1.7,0) -- (-0.7,0);
   % Ticks
    \draw [thin] plot coordinates {(-0.03,3.0) (0.03,3.0)};
    \draw [thin] plot coordinates {(3.0,-0.03) (3.0,0.03)};
   % Label 
    \node [left] at (-1.7,1.) {$y$};
    \node [below] at (-0.7,0) {$x$};
   % Label tiks
   %\node at (1.0,0.8) { %\resizebox{0.7\width}{0.7\height} 
   %\footnotesize {$q$}};
    % Label ticks
    \node [left] at (0,3) { %\resizebox{0.7\width}{0.7\height}
    \small {$(0,l)$}};
    \node [below] at (3.,0) { %\resizebox{0.5\width}{0.5\height}
    \small{$(l,0)$}};
    \node [below] at (0,0) { %\resizebox{0.5\width}{0.5\height}
    \small{$(0,0)$}};
    % Points
    \fill (0,0) circle (0.2em);
    \fill (3,0) circle (0.2em);
    \fill (0,3) circle (0.2em);
   % Punti notevoli
   %\fill (0.75,0.75) circle (0.05);
     %\draw plot [mark=*, smooth] coordinates
     %         {(0,0.5) (0.25,0.5) (3,1.7) (3,1) (4,0)};
     %\pattern [pattern=north west lines]
     %(0,0) -- (0,1) -- (1,1) -- (2,0);
    \node [above] at (0,1.5)  [rotate=90] { 
      \begin{tabular}{@{}c@{}}
        \small{$\tensorOne{u} = \tensorOne{0}$} \\
        \small{$\tensorOne{q} \cdot \tensorOne{n}=\tensorOne{0}$}
      \end{tabular} };    
    \node [below] at (3,1.5)  [rotate=90] { 
      \begin{tabular}{@{}c@{}}
        \small{$\tensorOne{q} \cdot \tensorOne{n}=\tensorOne{0}$} \\
        \small{$\left( \tensorFour{C}_{dr} : \nabla^{s} \tensorOne{u} - b p \tensorTwo{1} \right) \cdot \tensorOne{n}=\tensorOne{0}$}
      \end{tabular} };
    \node [below] at (1.5,0) {
      \begin{tabular}{@{}c@{}}
        \small{$\tensorOne{q} \cdot \tensorOne{n}=\tensorOne{0}$} \\
        \small{$\left( \tensorFour{C}_{dr} : \nabla^{s} \tensorOne{u} - b p \tensorTwo{1} \right) \cdot \tensorOne{n}=\tensorOne{0}$}
      \end{tabular} };
    \node [above] at (1.5,3) {
      \begin{tabular}{@{}c@{}}
        \small{$\tensorOne{q} \cdot \tensorOne{n}=\tensorOne{0}$} \\
        \small{$\left( \tensorFour{C}_{dr} : \nabla^{s} \tensorOne{u} - b p \tensorTwo{1} \right) \cdot \tensorOne{n}=\bar{t} \tensorOne{n}$}
      \end{tabular} };
    \end{tikzpicture}
%  \caption{Test 1, Barry-Mercer's problem: Domain and boundary conditions.}\label{fig:BM_dom}
%\end{figure}
  \caption{}
  \end{subfigure}
  \hfill
  \begin{subfigure}[b]{.45\linewidth}
    \small
    \centering
    \begingroup
    \renewcommand{\arraystretch}{1.4} % Default value: 1
    \begin{tabular}{lll}
      \toprule
      Quantity                          & Value & Unit \\
      \midrule   
      Young's modulus ($E$)             & $1 \times 10^5$    & [Pa] \\  
      Poisson's ratio ($\nu$)           & $0.4$              & [-] \\ 
      Biot's coefficient ($b$)          & $1.0$              & [-] \\ 
      \begin{tabular}[c]{@{}Sl@{}}
        Constrained specific \\[-7pt]
        storage ($S_{\epsilon}$)
      \end{tabular}                     & $0$                & [Pa] \\
      Isotropic permeability ($\kappa$) & $1 \times 10^{-7}$ & [m$^2$] \\
      Fluid viscosity ($\mu$)           & $1 \times 10^{-3}$ & [Pa $\cdot$ s] \\
      Domain size $x$-$y$ ($l$)         & $1.0$              & [m] \\
      \bottomrule
    \end{tabular}
    \endgroup
    \caption{}
  \end{subfigure}
  \hfill\null
\caption{Test 2, Cantilever beam: (a) domain sketch and (b) physical parameters.}\label{fig:Can_dom}
\end{figure}

A porous cantilever beam problem is now considered \cite{PhiWhe09}.
Domain size and properties are summarized in Fig.~\ref{fig:Can_dom}.
The domain is the unit square or cube for the 2-D and 3-D case, respectively. 
No-flow boundary conditions along all sides are imposed, with the displacements
fixed along the left edge and a uniform load applied at the top. 
Fig.~\ref{fig:can_1} shows the pressure solution obtained with a grid spacing $h=1/10$.
In the unstabilized formulation, checkerboard oscillations arise close to the left constrained edge. 
As in the previous test case, the proposed stabilization eliminates the spurious pressure modes.
This behavior can be better observed along the three vertical profiles provided in Fig.~\ref{fig:can_2}.

\begin{figure}
  \small
  \centering
  \hfill
  \begin{subfigure}[c]{.25\linewidth}
    \begin{tikzpicture}[scale=1.0]
      \node[anchor=south west,inner sep=0] (image) at (.525\linewidth,0) {\includegraphics[width=.125\linewidth,height=.60\linewidth]{./bm_16_legend}};
      % Draw legen box and tick
      \draw [line width=0.3mm,color=black,line cap=round]  (0.525\linewidth,0.000\linewidth) --
                                                           (0.625\linewidth,0.000\linewidth) --
                                                           (0.625\linewidth,0.600\linewidth) --
                                                           (0.525\linewidth,0.600\linewidth) --
                                                           (0.525\linewidth,0.000\linewidth);
      \foreach \x in {0.0, 0.15, ..., 0.6}
        \draw (0.625*\linewidth, \x*\linewidth) -- (0.650*\linewidth, \x*\linewidth);
      % Labels
      \node [left] at (.875*\linewidth,0.60*\linewidth) {$ 3.0$};
      \node [left] at (.875*\linewidth,0.45*\linewidth) {$ 1.5$};
      \node [left] at (.875*\linewidth,0.30*\linewidth) {$ 0.0$};
      \node [left] at (.875*\linewidth,0.15*\linewidth) {$-1.5$};
      \node [left] at (.875*\linewidth,0.00*\linewidth) {$-3.0$};
      \node [below] at (.7\linewidth,-0.075*\linewidth) {Pressure [Pa]};
    \end{tikzpicture}    
  \end{subfigure}
  \begin{subfigure}[c]{.3\linewidth}
    \begin{tikzpicture}[scale=1.0]
      \node[anchor=south west,inner sep=0] (image) at (0,0) {\includegraphics[width=\linewidth]{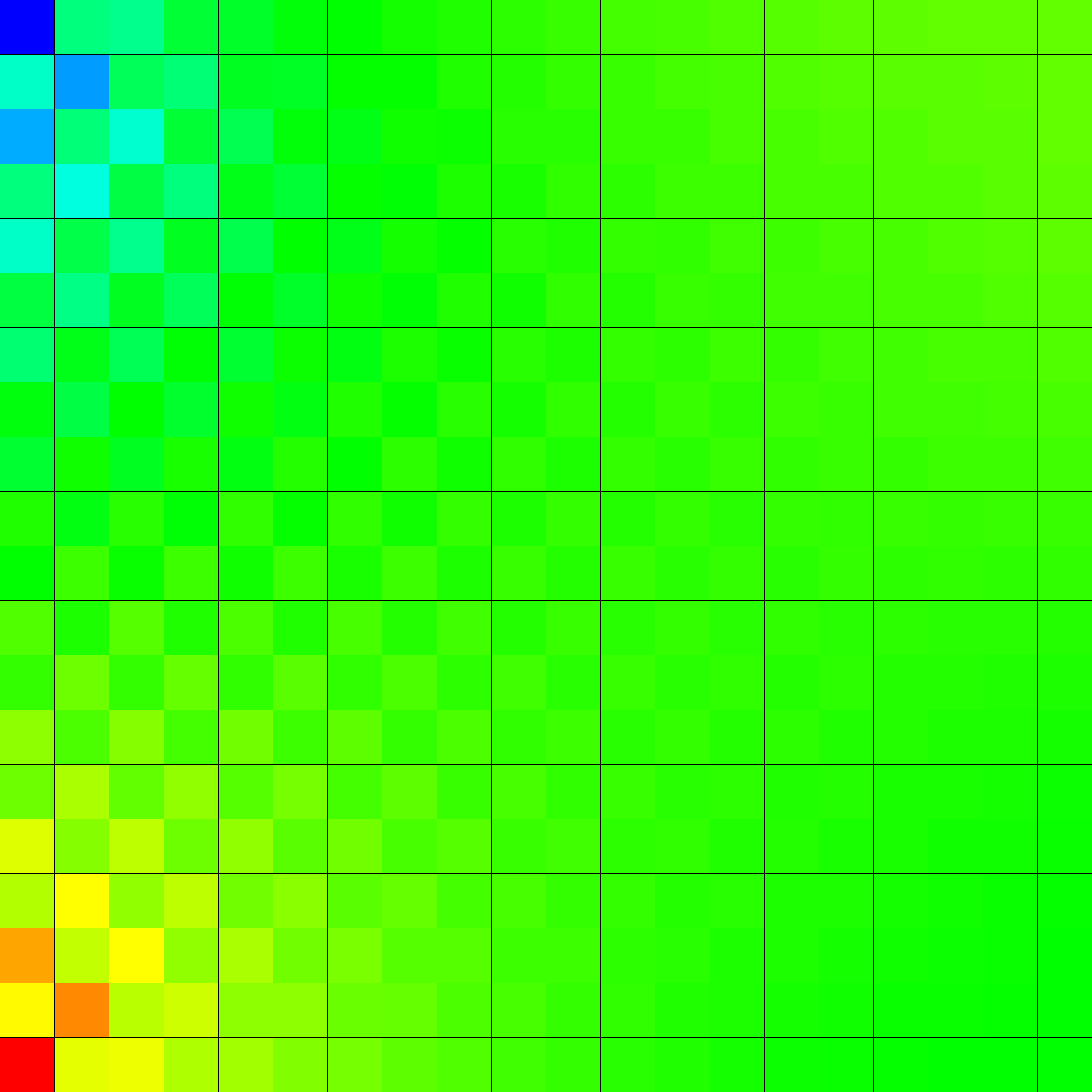}};
      \draw[ step=1*\linewidth/20,
             black,
             line width=0.3mm,
              line cap=round]
        (0.0,0.0) grid (\linewidth, \linewidth);
    \end{tikzpicture}
    \caption{}
  \end{subfigure}
  \hfill
  \begin{subfigure}[c]{.3\linewidth}
    \begin{tikzpicture}[scale=1.0]
      \node[anchor=south west,inner sep=0] (image) at (0,0) {\includegraphics[width=\linewidth]{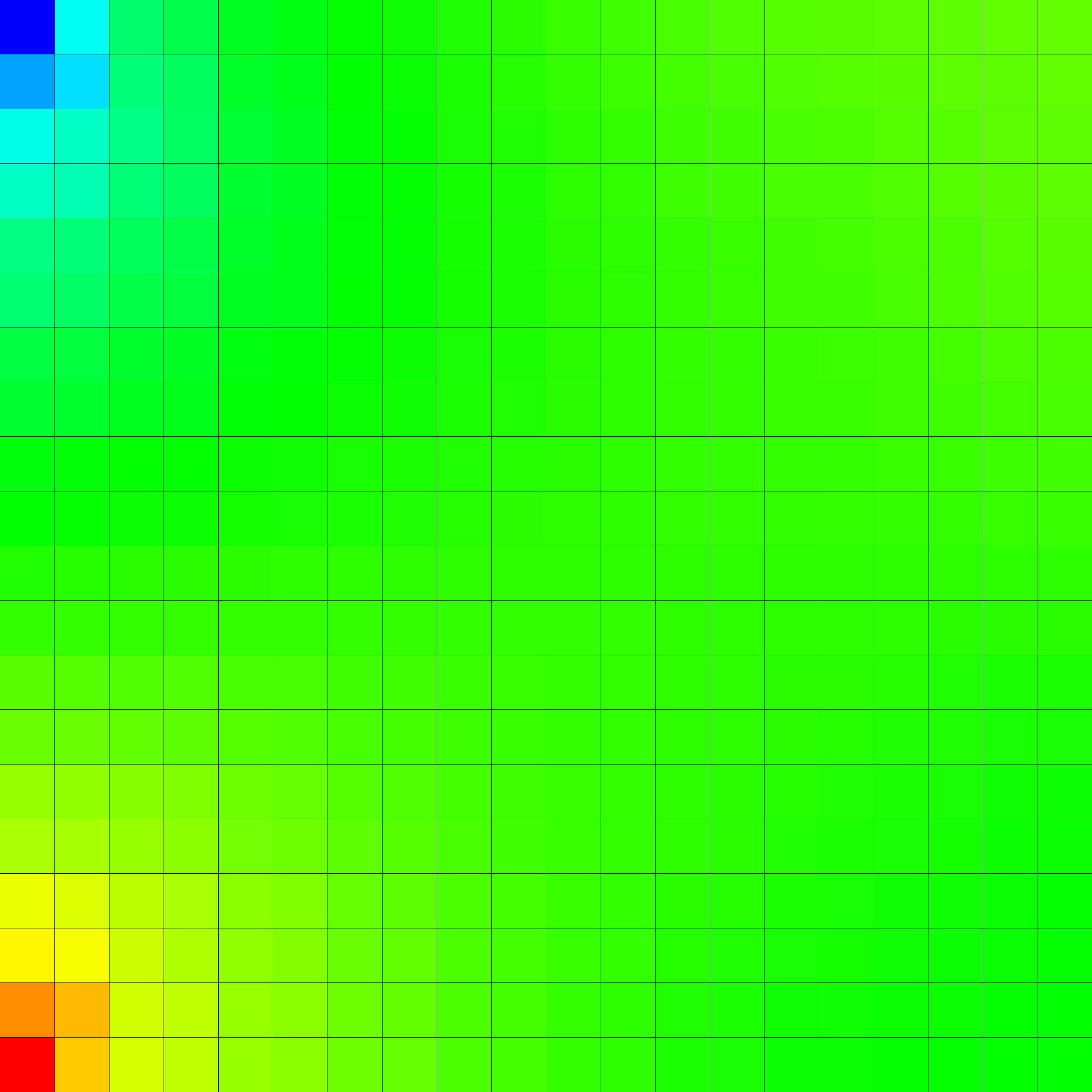}};
      \draw[ step=1*\linewidth/20,
             black,
             line width=0.3mm,
             line cap=round]
        (0.0,0.0) grid (\linewidth, \linewidth);
    \end{tikzpicture}
    \caption{}
  \end{subfigure}
  \hfill\null
\caption{{2D Cantilever beam: pressure solution for the unstabilized (a) and stabilized (b) formulations.}}\label{fig:can_1}
\end{figure}

\begin{figure}
\centering
\hfill
\begin{subfigure}[c]{.45\linewidth}
\begin{tikzpicture}
\pgfplotsset{scaled y ticks=false}
\begin{axis}[ scale=1.0,width=\linewidth,height=.75\linewidth, grid=major,xmin=0,xmax=1.,ymin=-3.,ymax=3.,
	xlabel={{\small $y$ [m]} }, ylabel={ {\small Pressure [Pa] }},
  xtick={0,0.2,0.4,0.6,0.8,1.0},
  xticklabels={{\small 0},{\small 0.2},{\small 0.4},{\small 0.6},
               {\small 0.8},{\small 1.0}},
  ytick={-3,-1.5,0,1.5,3.0},
  yticklabels={{\small -3.0},{\small -1.5},{\small 0.0},
               {\small 1.5},{\small 3.0}},
  ylabel near ticks,xlabel near ticks]
\addplot [dashed,black,mark=*]        table [x=x,y=y1] {./press_10.txt};
\addplot [dashdotted,black,mark=star] table [x=x,y=y2] {./press_10.txt};
\addplot [black,mark=square]          table [x=x,y=y3] {./press_10.txt};
\legend{\small{$x=0.05$},\small{$x=0.15$},\small{$x=0.25$}}
\end{axis}
\end{tikzpicture}	
\caption{Unstabilized}
\end{subfigure}
\hfill
\begin{subfigure}[c]{.45\linewidth}
\begin{tikzpicture}
\pgfplotsset{scaled y ticks=false}
\begin{axis}[ scale=1.0,width=\linewidth,height=.75\linewidth, grid=major,xmin=0,xmax=1.,ymin=-3.,ymax=3.,
	xlabel={{\small $y$ [m]} }, ylabel={ {\small Pressure [Pa] }},
  xtick={0,0.2,0.4,0.6,0.8,1.0},
  xticklabels={{\small 0},{\small 0.2},{\small 0.4},{\small 0.6},
               {\small 0.8},{\small 1.0}},
  ytick={-3,-1.5,0,1.5,3.0},
  yticklabels={{\small -3.0},{\small -1.5},{\small 0.0},
               {\small 1.5},{\small 3.0}},
  ylabel near ticks,xlabel near ticks]
\addplot [dashed,black,mark=*]        table [x=x,y=y1] {./press_10_stab.txt};
\addplot [dashdotted,black,mark=star] table [x=x,y=y2] {./press_10_stab.txt};
\addplot [black,mark=square]          table [x=x,y=y3] {./press_10_stab.txt};
\legend{\small{$x=0.05$},\small{$x=0.15$},\small{$x=0.25$}}
\end{axis}
\end{tikzpicture} 
\caption{Stabilized}
\end{subfigure}
\hfill\null
\caption{{2D Cantilever beam: pressure solution along vertical sections for the unstabilized and stabilized formulations.}}\label{fig:can_2}
\end{figure}
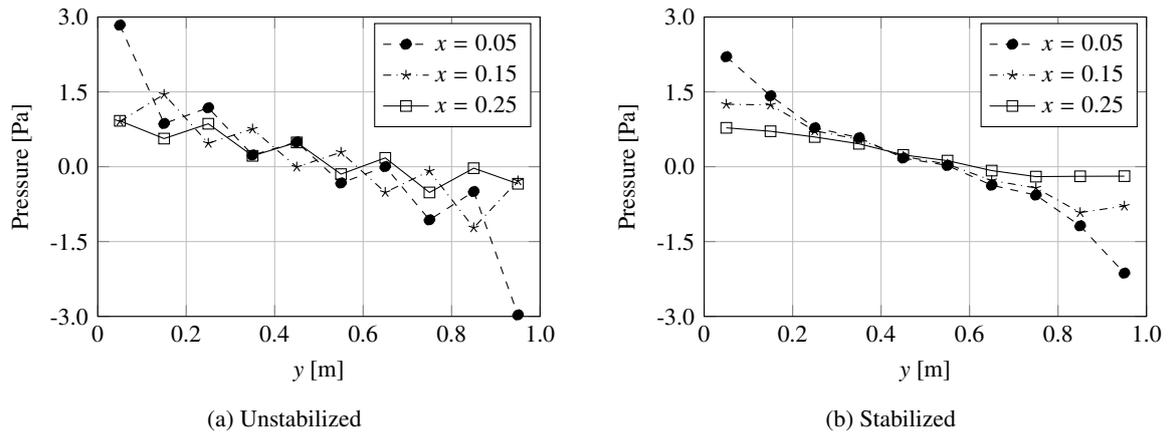

\begin{table}
\centering
\small
\begin{tabular}{ c c r r r r}
\toprule
\multicolumn{1}{c}{$1/ h$}& \begin{tabular}{@{}c@{} } number of  \\ elements \end{tabular} &
\multicolumn{1}{r}{$n_u$}&\multicolumn{1}{r}{$n_q$}&
\multicolumn{1}{r}{$n_p$}&\multicolumn{1}{r}{\begin{tabular}{@{}c@{} } number of  \\ unknowns \end{tabular}} \\ 
\midrule
 10 & $ 10 \times  10 \times  10$ &      3,993 &      3,300 &      1,000 &       8,293 \\ 
 20 & $ 20 \times  20 \times  20$ &     27,783 &     25,200 &      8,000 &      60,983 \\ 
 40 & $ 40 \times  40 \times  40$ &    206,763 &    196,800 &     64,000 &     467,563 \\ 
 64 & $ 64 \times  64 \times  64$ &    823,875 &    798,720 &    262,144 &   1,884,739 \\ 
128 & $128 \times 128 \times 128$ &  6,440,067 &  6,340,608 &  2,097,152 &  14,877,827 \\ 
256 & $256 \times 256 \times 256$ & 50,923,779 & 50,528,256 & 16,777,216 & 118,229,251 \\ 
\bottomrule
\end{tabular} 
\caption{Test 2, 3D Cantilever beam: grid refinement and problem size.%\todoi{Here unknown and prescribed DOFs are not separated. That's OK. However, we may add a remark (maybe in the revision stage) at the end of Sections \ref{sec:MFE}-\ref{sec:MHFE} saying that we assemble matrix $\blkMat{A}_M$ and $\blkMat{A}_H$ ignoring essential BCs, that are then enforced in a strong way}
}
\label{tab:grids}
\end{table}

\begin{table}
\centering
\small
\begin{tabular}{c c c c c c c c c c c c c}
\toprule
&&\multicolumn{5}{c}{$\Delta t=0.1$ s}&&\multicolumn{5}{c}{$\Delta t=0.00001$ s}\\
\cline{3-7} \cline{9-13}
&&\multicolumn{2}{c}{ Mixed FE } && \multicolumn{2}{c}{ Hybrid FE } &&\multicolumn{2}{c}{ Mixed FE } && \multicolumn{2}{c}{ Hybrid FE } \\
\cline{3-4} \cline{6-7} \cline{9-10} \cline{12-13}
$1/h$ && No Stab. & Stab. && No Stab. & Stab. && No Stab. & Stab. && No Stab. & Stab. \\
\midrule
10 && 47 & 47 && 42 & 37 && 116 & 49 &&  88 & 39 \\
20 && 52 & 52 && 45 & 41 && 267 & 57 && 161 & 41 \\
40 && 55 & 55 && 48 & 43 && 231 & 63 && 136 & 42 \\
\bottomrule
\end{tabular}
\caption{{Test 2, 3D Cantilever beam: Iteration counts for $\blkMat{M}^{(m)}_I$ and $\blkMat{M}^{(h)}_I$ with the MFE and MHFE formulations.}}\label{tab:can_1}
\end{table}

We analyze the performance of the linear solver and the effects brought by the introduction of the stabilization procedure on 6 successive grid refinements in a 3-D setting, corresponding to the problem size provided in Table~\ref{tab:grids}.
Recall here that $n_q=n_\pi$, which coincides with the number of faces in the grid.
To emphasize the role of the approximations introduced in the Schur complement computations,
we first compare the performance of the block preconditioner variants $\blkMat{M}^{(h)}_{I}$ and $\blkMat{M}^{(m)}_{I}$, i.e., where the inverse of $A_{uu}$, $A_{qq}$, and $\tilde{C}_\pi$ or $\tilde{C}_p$ are applied exactly by nested direct solvers.
Table~\ref{tab:can_1} provides the iteration count for different time-step sizes and the first three grid refinements.
For $\Delta t =0.1$ s, the two formulations give essentially the same outcome.
Indeed, when the conditions are far from the incompressible/undrained limit the effect of the stabilization vanishes as to both the solution accuracy and the solver performance.
On the other hand, with a smaller time-step size, e.g., $\Delta t =0.00001$ s, the preconditioned Krylov convergence behavior can differ significantly between the two formulations. 
An important degradation in the linear solver performance is observed when using the unstable formulation, as a consequence of the presence of near-singular modes.
In the stabilized formulation, such an issue is completely removed and the iteration counts also prove quite stable with the grid size $h$.

For bigger problems, the use of nested direct solvers is no longer viable.
Table~\ref{tab:can_3} shows the performance obtained with $\blkMat{M}^{(m)}_{II}$, $\blkMat{M}^{(h)}_{II}$ and $\blkMat{M}^{(h)}_{III}$, where nested direct solvers are replaced by inner AMG preconditioners, for the same time-step sizes as Table~\ref{tab:can_1} and the finest grids.
In this numerical experiment, the objective is to perform a weak scalability test of the proposed preconditioners, by keeping the same problem size for each processor.
The results show that the mixed hybrid formulation is usually more efficient than the mixed approach, with the $\blkMat{M}^{(h)}_{II}$ preconditioner variant outperforming $\blkMat{M}^{(m)}_{II}$ in all the examined test cases.
The weak scalability of the proposed preconditioners appears to be fairly good, showing only a mild increase of iteration count when refining the grid size up to about 118 million unknowns.
Notice also that all the proposed approaches are optimally scalable with respect to the timestep size, since the number of iterations does not change varying the $\Delta t$ size.

\begin{table}
\centering
\small
\begingroup
\setlength{\tabcolsep}{3pt} % Default value: 6pt
\begin{tabular}{r r r r r r r r r r r r r r r r r r}
\toprule
%\multicolumn{3}{c}{{$\Delta t=0.1$ s}}
&&&&\multicolumn{4}{c}{{Mixed FE ($\blkMat{M}^{(m)}_{II}$)}} && \multicolumn{4}{c}{{Hybrid FE ($\blkMat{M}^{(h)}_{II}$)}} && \multicolumn{4}{c}{{Hybrid FE ($\blkMat{M}^{(h)}_{III}$)}}\\
\cline{5-8} \cline{10-13} \cline{15-18}
{$1/h$}&{\# dofs}&{\# proc.}&&{\# iter.} & {$T_p$ [s]} & {$T_s$ [s]} & {$T_t$ [s]}&&
{\# iter.} & {$T_p$ [s]} & {$T_s$ [s]} & {$T_t$ [s]}&&
{\# iter.} & {$T_p$ [s]} & {$T_s$ [s]} & {$T_t$ [s]}\\
\midrule
{64}&{1,884,739}&{36}&&{55}&{1.0}&{2.9}&{3.9}&&
{53}&{1.0}&{2.5}&{3.5}&&
{36}&{3.4}&{4.8}&{8.2}\\
{128}&{14,877,827}&{288}&&{63}&{1.5}&{4.2}&{5.7}&&
{63}&{1.6}&{3.6}&{5.2}&&
{47}&{4.2}&{6.5}&{10.7}\\
{256}&{118,229,251}&{2268}&&{90}&{2.6}&{8.5}&{11.1}&&
{86}&{2.2}&{7.7}&{9.9}&&
{63}&{6.9}&{9.9}&{16.8}\\
\midrule
{64}&{1,884,739}&{36}&&{56}&{1.0}&{3.2}&{4.2}&&
{52}&{1.0}&{2.7}&{3.7}&&
{31}&{3.4}&{4.3}&{7.7}\\
{128}&{14,877,827}&{288}&&{71}&{1.3}&{5.3}&{6.6}&&
{63}&{1.5}&{4.1}&{5.6}&&
{43}&{3.5}&{6.4}&{9.9}\\
\small{256}&\small{118,229,251}&\small{2268}&&\small{95}&\small{2.8}&\small{12.3}&\small{15.1}&&
\small{83}&\small{2.2}&\small{7.9}&\small{10.1}&&
\small{59}&\small{5.9}&\small{10.1}&\small{16.0}\\
\bottomrule
\end{tabular}
\endgroup
\caption{{Test 2, 3D Cantilever beam: Performance of $\blkMat{M}^{(m)}_{II}$, $\blkMat{M}^{(h)}_{II}$ and $\blkMat{M}^{(h)}_{III}$ for $\Delta t = 0.1$ s (above) and $\Delta t=0.00001$ s (below).}}\label{tab:can_3}
\end{table}

The most expensive effort in all the preconditioner variants relies by far in the inexact solve of $A_{uu}$.
We compare two different V-cycle AMG approaches for this task with the $\blkMat{M}^{(h)}_{II}$  and $\blkMat{M}^{(h)}_{III}$ variants. 
In the former, we introduce the Separate Displacement Component approximation, while in the latter knowledge of the Rigid Body Modes is exploited to build the near-kernel space.
We can observe that in the second case the iteration count to achieve the convergence is reduced by approximately 25\% and 30\%, but such an acceleration does not seem to pay off for the required additional cost.
Therefore, the Separate Displacement Component approach appears to be preferable.

\subsection{SPE10-based benchmark}

\begin{figure}
  \hfill
  \begin{subfigure}[b]{0.45\linewidth}
    \centering
    \includegraphics[width=\linewidth]{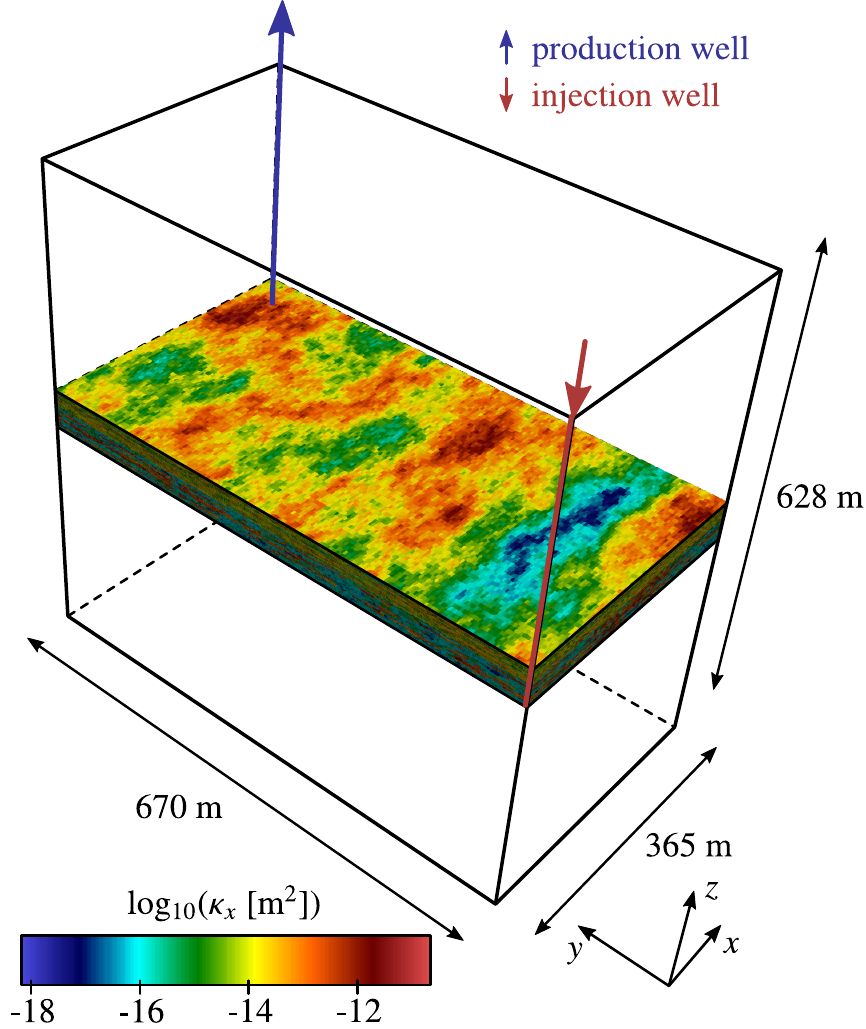}
    \caption{}
  \end{subfigure}
  \hfill
  \begin{subfigure}[b]{.5\linewidth}
    \small
    \centering
    \begingroup
    \renewcommand{\arraystretch}{1.4} % Default value: 1
    \begin{tabular}{lll}
      \toprule
      Quantity                          & Value & Unit \\
      \midrule   
      Young's modulus ($E$)             & $5 \times 10^9$    & [Pa] \\  
      Poisson's ratio ($\nu$)           & $0.25$              & [-] \\ 
      Biot's coefficient ($b$)          & $1.0$              & [-] \\ 
      \begin{tabular}[c]{@{}Sl@{}}
        Constrained specific \\[-7pt]
        storage ($S_{\epsilon}$)
      \end{tabular}                     & $0$                & [Pa] \\
      Reservoir permeability ($\tensorTwo{\kappa}$) & SPE10 data \cite{ChrBlu01} & [m$^2$] \\
%      \begingroup
%      \renewcommand{\arraystretch}{0.5}
      \begin{tabular}[c]{@{}Sl@{}}
        Overburden/underburden \\[-7pt]
        isotropic permeability ($\kappa$)
      \end{tabular}% \endgroup
                                        & $1 \times 10^{-17}$ & [m$^2$] \\
      Fluid viscosity ($\mu$)           & $3 \times 10^{-3}$ & [Pa $\cdot$ s] \\
      Domain size $x$                   & $365$              & [m] \\
      Domain size $y$                   & $670$              & [m] \\
      Domain size $z$                   & $628$              & [m] \\
      \bottomrule
    \end{tabular}
    \endgroup
    \caption{}
  \end{subfigure}
  \hfill\null
\caption{Test 3: SPE10-based benchmark. Sketch of the simulated domain showing the horizontal permeability ($\kappa_x = \kappa_y$) field in the reservoir (a) and hydromechanical parameters (b).}\label{fig:test3}
\end{figure}

For a strong scalability test, we consider a typical petroleum reservoir engineering application reproducing a well-driven flow in a deforming porous medium.
The model setup is based on the 10\textsuperscript{th} SPE Comparative Solution Project~\cite{ChrBlu01}, a well-known, challenging benchmark in reservoir applications.
Here, we add a poroelastic mechanical behavior with incompressible fluid and solid constituents. 
The model adds 288-m  thick  overburden  and  underburden layers  to  the  original  SPE10  reservoir. 
Fig.~\ref{fig:test3} provides a sketch of the physical domain and the relevant hydromechanical properties.
The  computational grid has 3,410,693 nodes, 10,062,960 faces and 3,326,400 cells, for an overall number of degrees of freedom equal to 23,621,439.
We use the  original  SPE10  anisotropic  permeability  distribution~\cite{ChrBlu01}, while an isotropic permeability value equal to 0.01 mD is assigned to the overburden and underburden layers.
Homogeneous Young’s modulus $E=5000$ MPa, Poisson’s ratio $\nu=0.25$, and Biot’s  coefficient $b=1.0$ are  assumed everywhere.
One injector and one production well, located at opposite corners of the domain, penetrate vertically the entire reservoir and drive the porous fluid flow.
The reader can refer to~\cite{Whietal19} for additional details.

The problem is solved using the MHFE formulation and the $\blkMat{M}_{II}^{(h)}$ variant as preconditioning approach.
This SPE10-based benchmark is quite challenging, testing the preconditioner robustness with respect to a strong variability of the permeability and porosity parameters. Mechanical heterogeneity was not introduced because the variation of $E$ and $\nu$ in subsurface applications is typically mild in comparison to other properties. Moreover, 
the mechanical parameters typically influence the overall performance in a marginal way, as shown for instance in~\cite{CasWhiFer16} 
and~\cite{FraCasFer20}.
Regardless, severe jumps in the mechanical parameters can be effectively tackled by improving the quality of the inner
preconditioner approximating the application of $A_{uu}^{-1}$.
We emphasize that restricting the parameters to the limit case of
incompressible solid ($b = 1.0$) and fluid ($S_{\epsilon} = 0.0$) constituents corresponds to the configuration maximizing hydromechanical coupling \cite{KimTchJua11a,CasWhiTch15}, hence the most challenging setup for assessing the overall block preconditioner performance.

Table~\ref{tab:SPE_ss} provides the results of the strong scalability test obtained for solving one linear system with $\Delta t=0.1$ day.
The number of computing processors is progressively doubled while maintaining the same total problem size. 
The iteration count remains nearly constant, with excellent computational efficiency. 

\begin{table}
\centering
\small
\begin{tabular}{ r r r r r r r}
\toprule
{\# proc.}&{\# dofs / proc.}&{\# iter.} 
& {$T_p$ [s]} & {$T_s$ [s]} & {$T_t$ [s]}& Efficiency \\
\midrule
{36}&{656,151}&{85}&{9.8}&{97.3}&{107.1}&{100\%}\\
{72}&{328,075}&{85}&{5.3}&{47.6}&{53.0}&{101\%}\\
{144}&{164,037}&{88}&{3.1}&{22.7}&{25.8}&{104\%}\\
{288}&{82,018}&{89}&{1.8}&{10.4}&{12.2}&{110\%}\\
{576}&{41,009}&{90}&{1.3}&{5.3}&{6.6}&{101\%}\\
\bottomrule
\end{tabular}
\caption{{Test 3, SPE10: Strong scalability test for $\blkMat{M}_{II}^{(h)}$.}}\label{tab:SPE_ss}
\end{table}

\section{Conclusions}

This work presents a three-field (displacement-pressure-Lagrange multiplier) mixed hybrid formulation of coupled poromechanics discretized by low-order elements.
With respect to the mixed approach, the MHFE discretization uses as primary unknown on the element edges or faces a pressure value instead of Darcy's velocity.
This produces a global discrete system with generally better algebraic properties.
Low order spaces, however, such as the $\mathbb{Q}_1-\mathbb{RT}_0-\mathbb{P}_0$ triple, are not inf-sup stable in the limit of undrained/incompressible conditions and might give rise to spurious modes in the pressure solution with a classical checkerboard structure.
A stabilization strategy and an effective solver have been introduced for the mixed hybrid formulation.

The stabilization is based on the macro-element theory and the local pressure jump approach originally introduced for Stokes problems \cite{SilKec90} and more recently for coupled multiphase flow applications \cite{CamWhiBor19}.
It has a number of useful features:
\begin{enumerate}
    \item From the algebraic viewpoint, such a stabilization consists of adding to the $\bar{A}_{pp}$ contribution the matrix $A_{stab}$, whose entries are proportional to an appropriate stabilization parameter. The value of such parameter is automatically selected at the macro-element level such that the limits of the non-zero eigenspectrum of the resulting local Schur complement do not change.
    \item The sparsity pattern of $A_{stab}$ is a subset of that of $A_{p\pi}A_{\pi p}$, therefore the matrix form of the stabilized mixed hybrid formulation is not structurally different from the unstabilized one and does not require any specific modification at the solver level.
    \item The stabilization effectiveness has been validated in two test cases, demonstrating the preservation of the expected convergence rate for the original formulation and an overall improvement of the computational efficiency near undrained/incompressible conditions. 
\end{enumerate}

The convergence of Krylov subspace methods for the solution of the resulting system of linear equations is accelerated by a block triangular preconditioner based on a two-level Schur complement-approximation approach.
Theoretical and computational properties of the proposed algorithm have been investigated, providing the main results that follow:
\begin{enumerate}
    \item A bound for the eigenvalues of the preconditioned matrix has been introduced depending only on the quality of the approximation of the first-level Schur complement $B_p$.
    \item A proof is provided stating that algebraically simple approximations of $B_p$, such as a diagonal matrix based on the fixed-stress splitting approach \cite{CasWhiTch15,CasWhiFer16}, guarantee that the second-level Schur complement $\tilde{C}_\pi$ is SPD with a bounded condition number independently of the time step size $\Delta t$ and the material parameters.
    \item The computational performance of the linear solver, including weak and strong scalability in massively parallel architectures, has been verified in both theoretical benchmarks and field applications totaling up to about 118 millions unknowns. The numerical results show that the proposed solver is: (i) robust with respect to material heterogeneity and anisotropy; (ii) optimally and nearly-optimally scalable vs the timestep and space discretization size, respectively; (iii) strongly scalable in parallel architectures down to about 40,000 unknowns per computing nodes; and (iv) generally more efficient than existing approaches for stabilized mixed three-field formulations. 
\end{enumerate}

\section*{Acknowledgements}
Partial funding was provided by Total S.A. through the FC-MAELSTROM Project.
The authors wish to thank Chak Lee for helpful discussions.
Portions of this work were performed by MF and MF within the 2020 INdAM-GNCS project ``Optimization and advanced linear algebra for PDE-governed problems''. Portions of this work were performed by NC and JAW under the auspices of the U.S. Department of Energy by Lawrence Livermore National Laboratory under Contract DE-AC52-07NA27344.

\appendix
\section{Finite Element matrices and vectors}\label{app:mat}
The matrices and vectors introduced in Section \ref{sec:MFE} are assembled in the standard way from the elemental contributions. In \eqref{eq:mfeSys}, the global matrix expressions read:
\begin{linenomath}
\begin{subequations}
\begin{align}
  {[A_{uu}]}_{ij}&={(\nabla^s \tensorTwo{\eta}_i, \tensorFour{C}_{\text{dr}} : \nabla^s \tensorTwo{\eta}_j)}_\Omega, &
  &&
  {[A_{up}]}_{ij}&=-{(\text{div} \; \tensorTwo{\eta}_i, b \chi_j)}_\Omega, \\
  &&
  {[A_{qq}]}_{ij}&={( \tensorTwo{\phi}_i, \mu \tensorTwo{\kappa}^{-1} \cdot \tensorTwo{\phi}_j)}_\Omega, &
  {[A_{qp}]}_{ij}&=-{(\text{div} \; \tensorTwo{\phi}_i, \chi_j)}_\Omega,\\
  {[A_{pu}]}_{ij}&={(\chi_i, b \text{div} \; \tensorTwo{\eta}_j)}_\Omega, &
  {[A_{pq}]}_{ij}&={(\chi_i, \text{div} \; \tensorTwo{\phi}_j)}_\Omega, &
  {[A_{pp}]}_{ij}&={(\chi_i, S_{\epsilon} \chi_j)}_\Omega,
\end{align}
\label{eq:MFE_matrix}\null
\end{subequations}
\end{linenomath}
while the global right-hand side vectors are:
\begin{linenomath}
\begin{subequations}
\begin{align}
    {[\vec{f}_u]}_i&= {(\tensorTwo{\eta}_i \cdot \bar{\tensorOne{t}}_n)}_{\Gamma_\sigma} - {(\nabla^s \tensorTwo{\eta}_i, \tensorFour{C}_{\text{dr}} : \nabla^s \bar{\tensorOne{u}}^h_n)}_\Omega, \\
  {[\vec{f}_{q}]}_i&=- {(\tensorTwo{\phi}_i \cdot \tensorOne{n}, \bar{p}_n )}_{\Gamma_p}  - {(\tensorTwo{\phi}_i, \mu \tensorTwo{\kappa}^{-1} \cdot \bar{\tensorOne{q}}^h_n)}_\Omega, \\
  {[\vec{f}_p]}_i&= {(\chi_i, \tilde{f}_n)}_\Omega - {(b \; \text{div} \; \bar{\tensorOne{u}}^h_n, \chi_i )}_\Omega - \Delta t \; {(\; \text{div} \; \bar{\tensorOne{q}}^h_n, \chi_i )}_\Omega.
\end{align}
\label{eq:MFE_vector}\null
\end{subequations}
\end{linenomath}
The additional matrices and vectors introduced in Eqs. \eqref{eq:hmfeSys}-\eqref{eq:A_33H} read:
\begin{linenomath}
\begin{subequations}
\begin{align}
  {[A_{ww}]}_{ij}&={(\tensorTwo{\varphi}_i, \mu \tensorTwo{\kappa}^{-1} \cdot \tensorTwo{\varphi}_j)}_\Omega, &
  {[A_{wp}]}_{ij}&=-\sum_{T \in \mathcal{T}_h} {(\text{div} \; \tensorTwo{\varphi}_i, \chi_j)}_{T}, &
  {[A_{w\pi}]}_{ij}&=\sum_{T \in \mathcal{T}_h} {(\tensorTwo{\varphi}_i \cdot \tensorOne{n}_e, \zeta_j)}_{\partial T}, \\
  {[A_{pw}]}_{ij}&=\sum_{T \in \mathcal{T}_h} {(\chi_i, \text{div} \; \tensorTwo{\varphi}_j)}_{T}, \\
  {[A_{\pi w}]}_{ij}&=-\sum_{T \in \mathcal{T}_h} {(\zeta_i, \tensorTwo{\varphi}_j \cdot \tensorOne{n}_e)}_{\partial T},
  \end{align}
\label{eq:MHFE_matrix_OLD}\null
\end{subequations}
\end{linenomath}
and
\begin{linenomath}
\begin{subequations}
\begin{align}
  {[\vec{f}_w]}_i&= - {(\tensorTwo{\varphi}_i \cdot \tensorOne{n}, \bar{\pi}^h_n)}_{\Gamma_p}, \\
  {[\vec{f}_{p,H}]}_i&= {(\chi_i, \tilde{f}_n)}_\Omega - {(b \; \text{div} \; \bar{\tensorOne{u}}^h_n, \chi_i )}_\Omega, \\
  {[\vec{f}_{{\pi}}]}_i&=-{(\zeta_i,\bar{q}_n)}_{\Gamma_q}. 
\end{align}
\label{eq:MHFE_vector}\null
\end{subequations}
\end{linenomath}
Finally, the stabilization matrix introduced in \eqref{eq:stab_A} for both the MFE and MHFE discrete formulations is:
\begin{linenomath}
\begin{align}
{[A_{stab}]}_{ij}&=\sum_{M\in\mathcal{M}_h} \beta_M |M| \sum_{e \in \Gamma_M} \llbracket \chi_i \rrbracket_e \llbracket \chi_j \rrbracket_e.
\end{align}
\end{linenomath}
In the expressions above, $\{\tensorTwo{\eta}_i, \tensorTwo{\eta}_j \}$, $\{ \tensorTwo{\phi}_i, \tensorTwo{\phi}_j \}$, $\{ \tensorTwo{\varphi}_i, \tensorTwo{\varphi}_j \}$, $\{ \chi_i, \chi_j\}$ and $\{ \zeta_i, \zeta_j\}$ range over the bases for $\boldsymbol{\mathcal{U}}^h_0$, $\boldsymbol{\mathcal{Q}}^h_0$, $\boldsymbol{\mathcal{W}}^h$, $\mathcal{P}^h$ and $\mathcal{B}^h_0$ respectively.

%%%%%%%%%%%%%%%%%%%%%%%% Bibliography %%%%%%%%%%%%%%%%%%%%%%%%%%%%%%%%%%%%%%%%%%%%%
\bibliography{authorList,journalAbbreviationExtended,biblio}

\end{document}